\newtheorem*{thm}{Theorem}
\newtheorem*{proposition}{Proposition}
\newtheorem*{lem}{Lemma}
\theoremstyle{definition}
\theoremstyle{remark}
\begin{document}

\title[]{A Nonlocal Functional Promoting Low-Discrepancy Point Sets}
\keywords{Low discrepancy sequence, gradient flow, energy functional.}
\subjclass[2010]{11L03, 42B05, 82C22.}

\author[]{Stefan Steinerberger}
\address{Department of Mathematics, Yale University, New Haven, CT 06511, USA}
\email{stefan.steinerberger@yale.edu}
\thanks{The author is supported by the NSF (DMS-1763179) and the Alfred P. Sloan Foundation.}

\begin{abstract} Let $X = \left\{x_1, \dots, x_N\right\} \subset \mathbb{T}^d \cong [0,1]^d$ be a set of $N$ points in the $d-$dimensional torus that we want to arrange as regularly possible.  The purpose of this paper is to introduce the energy functional 
$$ E(X) = \sum_{1 \leq m,n \leq N \atop m \neq n} \prod_{k=1}^{d}{ (1 - \log{\left(2 \sin{ \left( \pi |x_{m,k} - x_{n,k} |\right)} \right)})}$$
and to suggest that moving a set $X$ into the direction $-\nabla E(X)$ may have the effect of increasing regularity of the set in the sense of decreasing discrepancy. We numerically demonstrate the effect for Halton, Hammersley, Kronecker, Niederreiter and Sobol sets. Lattices in $d=2$ are critical points of the energy functional, some (possibly all) are strict local minima.
\end{abstract}

\maketitle

\vspace{-10pt}

\section{Introduction}
\subsection{Introduction.} This paper is partially motivated by earlier results about how to distribute points on a manifold in a regular way. One idea (from \cite{sachs, stein}) is to not construct these points a priori but instead use (local) minimizers of an energy functional. For example, suppose we want to distribute $N$ points on the two-dimensional torus $\mathbb{T}^2$ in a way that is good for numerical integration. One way of doing this is by trying to find local minimizers of the energy functional
$$ F(X) = \sum_{1 \leq m,n \leq N \atop m \neq n}{ e^{- cN^{-1} \|x_i - x_j\|^2}},$$
where $c \sim 1$ is a constant.
These point configurations are empirically shown \cite{sachs} to be better at integrating trigonometric polynomials than commonly used classical constructions, the reason for that being a connection between the Gaussian and the heat kernel (which, in itself, can be interpreted as a mollifier in Fourier space dampening high oscillation). This method is also geometry independent and works on general compact manifolds (with $\|x_i -x_j\|$ replaced by the geodesic distance).

\subsection{The problem.} We were curious whether there is any way to proceed similarly in the problem of finding low-discrepancy sets of points. Suppose $X \subset [0,1]^2$ is a set $\left\{x_1, \dots, x_N\right\}$ of $N$ distinct points. A classical question is how would to arrange them so as to minimize the star discrepancy $D_N^*(X)$ defined by
$$ D^*_N(X)  = \max_{0 \leq x,y \leq 1} \left| \frac{ \# \left\{1 \leq i \leq N: x_{i,1} \leq x \wedge x_{i,2} \leq y\right\}}{N} - xy \right|.$$
 A seminal result of Schmidt \cite{sch} is
$$D_N^* \gtrsim \frac{\log{N}}{N}.$$
Many constructions of sets are known that attain this growth, we refer to the classical textbooks by Dick \& Pillichshammer \cite{dick}, Drmota \& Tichy \cite{drmota} and Kuipers \& Niederreiter \cite{kuipers} for descriptions. Some of the classical configurations are also used as examples in this paper. The  problem is famously unsolved in higher dimensions where the best known constructions \cite{halton, hammersley, nied, niederreiter, sobol} satisfy $D_N \lesssim (\log{N})^{d-1} N^{-1}$ but no matching lower bound exists (see  \cite{bil1, bil3, bil4}). Indeed, there is not even consensus as to whether the best known constructions attain the optimal growth or whether there might be more effective constructions in $d \geq 3$.

\begin{figure}[h!]
\begin{minipage}[l]{.49\textwidth}
\includegraphics[width = 5cm]{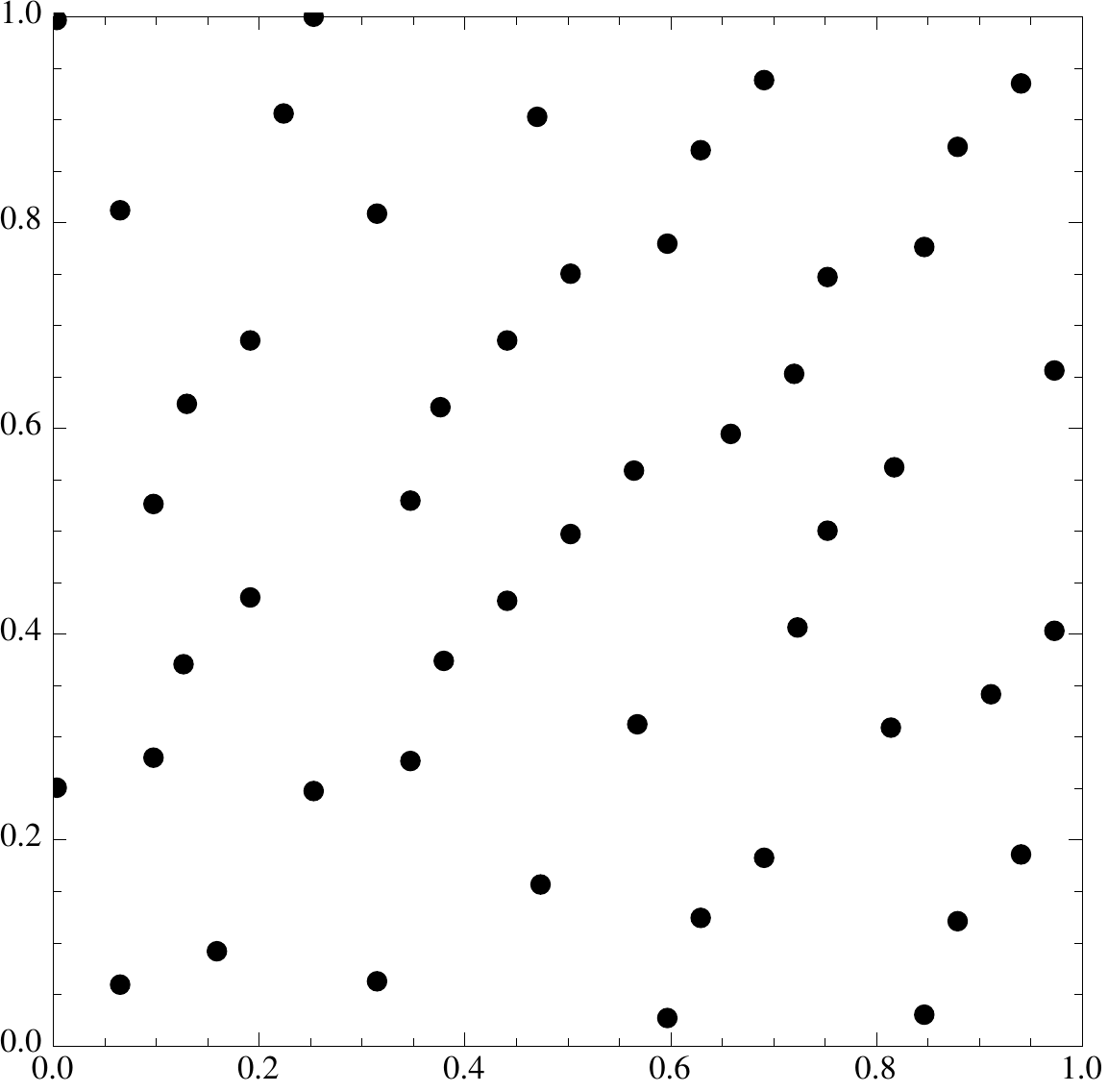} 
\end{minipage} 
\begin{minipage}[r]{.49\textwidth}
\includegraphics[width = 5cm]{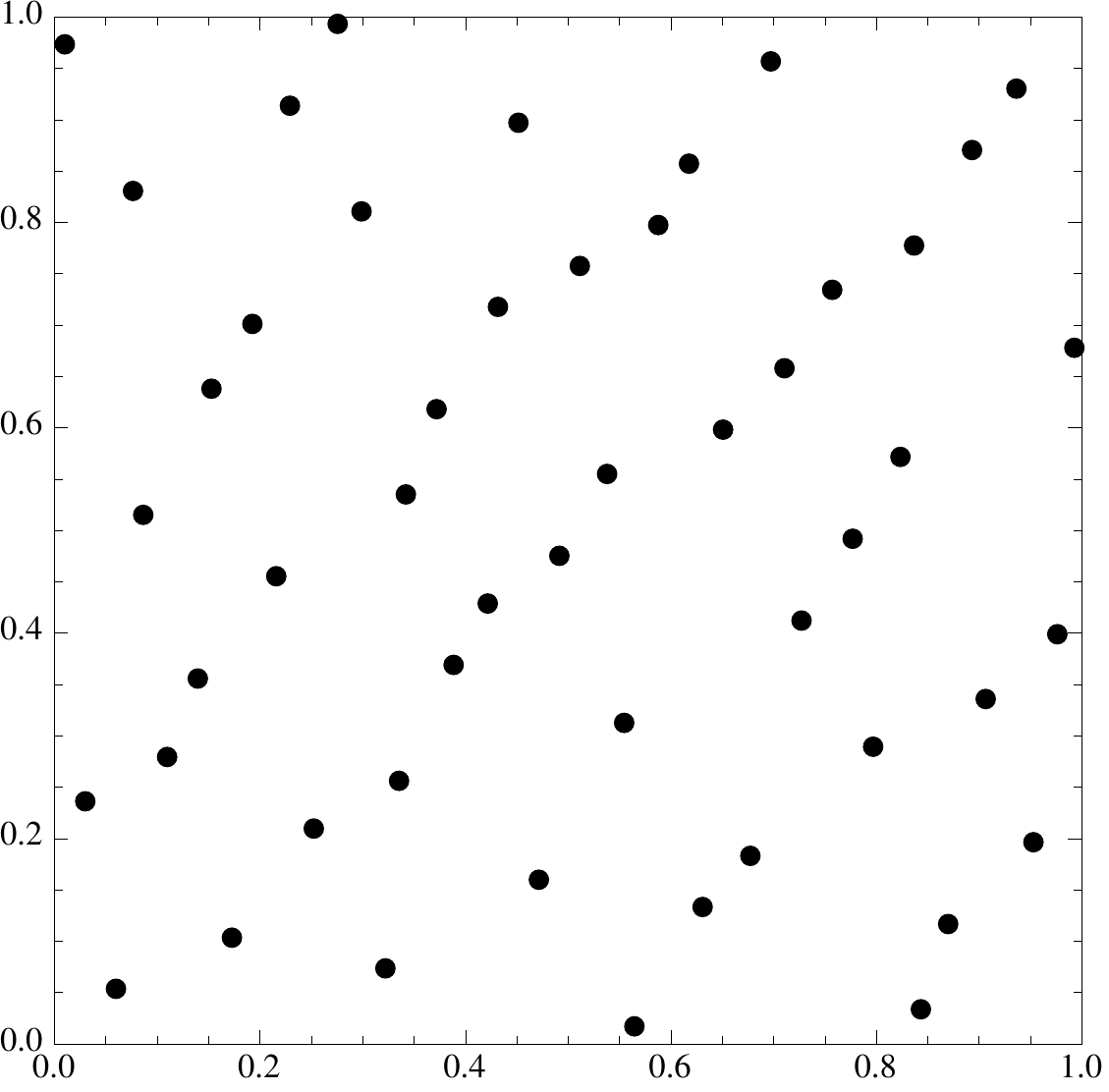} 
\end{minipage} 
\caption{Left: 50 points of a Niederreiter sequence with $D_N^* \sim 0.082$. Right: gradient flow produces a (similar) set, $D_N^* \sim 0.061$.}
\end{figure}

We were interested in whether it is possible to assign a notion of 'energy' to a set of points that vaguely corresponds to discrepancy in the sense that moving the points in such a way that perturbations of the points decreasing the energy also often decrease discrepancy. What would be of interest is a notion of energy that is
\begin{enumerate}
\item fast to compute
\item often helpful in improving existing point sets
\item and may have the potential to lead to new constructions.
\end{enumerate}
We believe this questions to be of some interest. The purpose of this paper is to derive one functional that seems to work very well in practice. Indeed, it works strikingly well: when applied to the classical low discrepancy constructions, it always seems to further decrease discrepancy (though sometimes, when the sets are already well distributed, only by very little). We provide a heuristic explanation in \S 3.3.
There might be many other such functionals (possibly related to different kinds of mathematics, e.g. \cite{ba, mo, os}) and we believe that constructing and understanding them could be quite interesting indeed.	
\begin{quote}
\textbf{Open Problem.} Construct other energy functionals whose gradient flow has a beneficial effect on discrepancy. What can be rigorously proven? Can they be used for numerical integration? How do they scale in the dimension?
\end{quote}

\subsection{Related results.}
We emphasize that this open problem stated in \S 1.2. is \textit{wide} open. In particular, we do not claim that our energy functional is necessarily the most effective one. Our functional certainly seems natural in light of our derivation; moreover, the author recently used it \cite{steind} to define sequences $(x_n)_{n=1}^{\infty}$ whose discrepancy seems to be extremely good when compared to classical sequences (however, the only known bound for these sequences is currently $D_N \lesssim N^{-1/2} \log{N}$). Nonetheless, there may be other functionals that are as natural and even more efficient. As an example of another functional that could be of interest, we mention Warnock's formula 
\cite[Lemma 2.14]{mat} for the $L^2-$discrepancy
$$ L^2(X)^2 = \frac{1}{3^s} - \frac{2}{N}\sum_{n=1}^{N}{ \prod_{i=1}^{d}{ \frac{1-x_{n,i}^2}{2}}} + \frac{1}{N^2} \sum_{n,m=1}^{N} \prod_{i=1}^{d} \min(1-x_{n,i}, 1-x_{m,i})$$
This could be used to define a gradient flow (where one has to be a bit careful with the non-differentiability of the minimum). A similar construction is presumably possible at a much greater level by using integration formulas in reproducing kernel Hilbert spaces (see e.g. \cite{fritz}). We recall that, if we sample in $(x_j)_{j=1}^{n}$ with weights $(w_j)_{j=1}^{n}$, then the worst case error in a reproducing kernel Hilbert space is given by the formula
\begin{align*}
 \mbox{worst-case error} &= \sum_{i,j=1}^{n}{w_i w_j K(x_i, x_j)} - 2 \sum_{j=1}^{n}{w_j} \int_{\Omega} K(x_j, y) d\mu(y) \\
&+ \int_{\Omega} \int_{\Omega} K(x,y) d\mu(x) d\mu(y).
\end{align*}
Functionals of this flavor might be amenable to a gradient flow approach at a great level of generality, however, this is outside the scope of this paper.

\begin{figure}[h!]
\begin{minipage}[l]{.49\textwidth}
\includegraphics[width = 5cm]{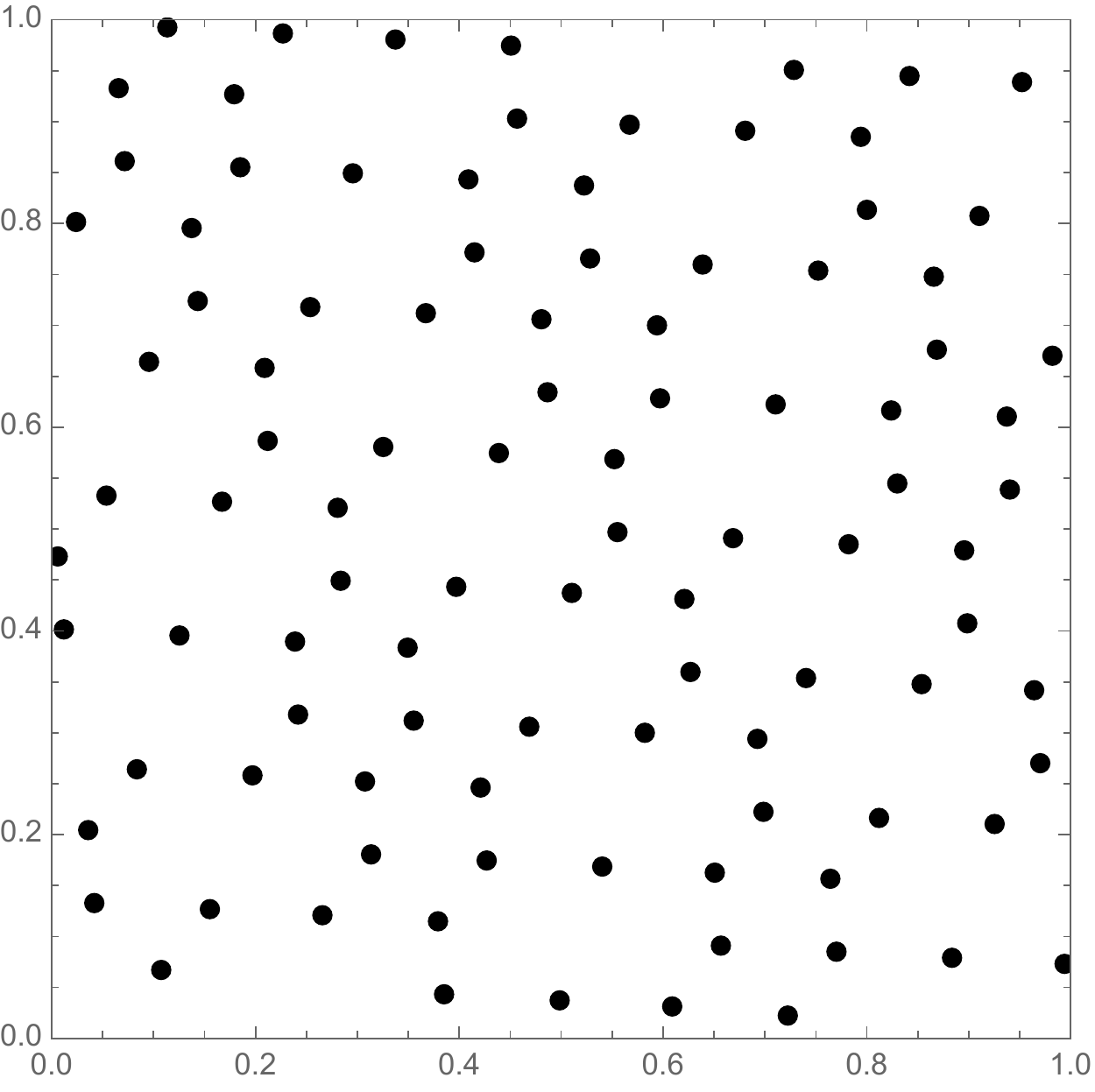} 
\end{minipage} 
\begin{minipage}[r]{.49\textwidth}
\includegraphics[width = 5cm]{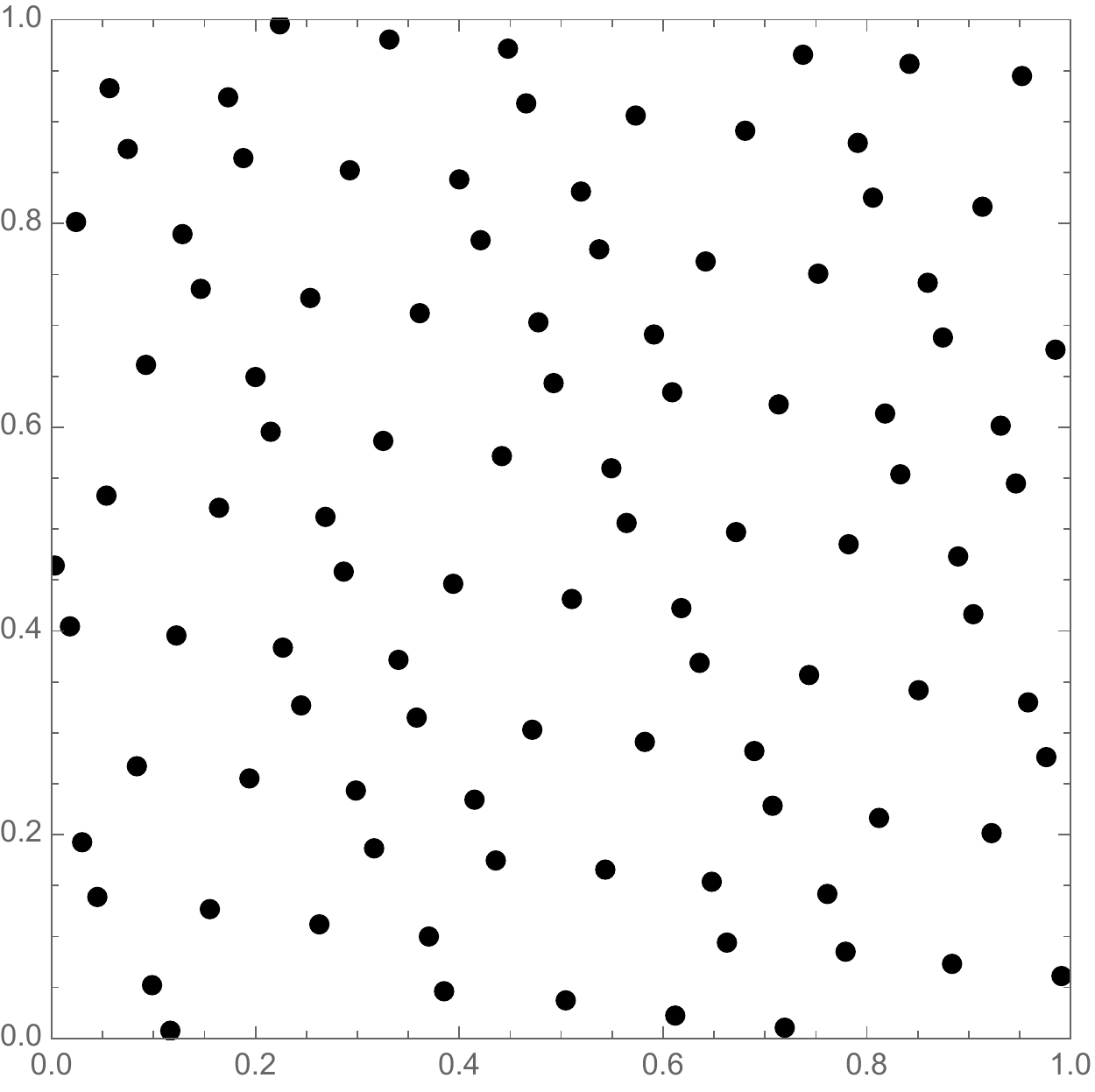} 
\end{minipage} 
\caption{Left: the set $\left\{ (\left\{\sqrt{2}n\right\}, \left\{\sqrt{\pi} n \right\} ): 1 \leq n \leq 100\right\}$ with  $D_N^* \sim 0.04$. Right: evolution of the gradient flow leads to a set with discrepancy $D_N^* \sim 0.03$.}
\end{figure}

To the best of our knowledge, the approach outlined in this paper as well as the associated functional is new. There is a broad literature around the underlying problem of construction of low-discrepancy sequences by various means. Traditional results where mainly concerned with asymptotic results (see e.g. \cite{dick, drmota, kuipers}). These constructions often have implicit constants that grow very quickly in the dimension; the search for results that are effective for a small number of points initiated a fertile area of research \cite{aisti, doerr00, heinrich, hinrich}. Even the mere task of computing discrepancy in high dimensions is nontrivial \cite{doerr2, gnewuch, gnewuch2}. We are not aware of any optimization algorithms that take an explicit set of points and then induce a gradient flow to try to decrease the discrepancy.

\subsection{Outline of the paper.} \S 2 introduces the energy functional and the main result. \S 3 explains how the energy functional was derived, describes the one-dimensional setting and relates it to Fourier analysis. A proof of the main result is given in \S 4. Numerical examples of how the energy functional acts on well-known constructions are given throughout the paper -- these examples are all two-dimensional (for simplicity of exposition). 

\begin{table}[h!]
\begin{tabular}{ l | c|  c |  c }
 Type of Sequences & $N$  & Discrepancy $D_N(X_N)$ & $D_N$ after Optimization \\[0.05cm]
Niederreiter sequence & 50 & 0.082 & 0.061    \\[0.05cm]
Hammersley (base 3) & 50   & 0.064  &  0.042     \\[0.05cm]
Sobol & 50  & 0.063 &  0.057   \\[0.05cm]
Halton (base 2 and 5) & 64   &  0.064 & 0.045    \\[0.05cm]
random points & 100   & 0.12  &  0.05     \\[0.05cm]
Halton (base 2 and 3) & 128 & 0.032 & 0.025   \\[0.05cm]
Niederreiter in $[0,1]^3$ & 50 & 0.098 & 0.093 \\[0.05cm]
$\mbox{vdc}_2 \times \mbox{vdc}_3 \times \left\{ \pi n\right\}$ & 100 & 0.074 & 0.066
\end{tabular}
\vspace{3pt}
\caption{Examples shown in this paper.}
\end{table}

We emphasize that the examples of point sets are all essentially picked at random, the functional does seem to work at an \textit{overwhelming} level of generality and we invite the reader to try it on their own favorite sets.

\section{An energy functional}
\subsection{The functional.} Given a set $X = \left\{x_1, \dots, x_N\right\} \subset \mathbb{T}^d \cong [0,1]^d$ of $N$ points in the $d-$dimensional torus where each point is given by 
$$ x_n = (x_{n,1}, \dots, x_{n,d}) \in \mathbb{T}^d,$$
we introduce the energy function $E:([0,1]^d)^N \rightarrow \mathbb{R}$ via
$$ E(X) = \sum_{1 \leq m,n \leq N \atop m \neq n} \prod_{k=1}^{d}{ (1 - \log{\left(2 \sin{ \left( \pi |x_{m,k} - x_{n,k} |\right)} \right)})}.$$
We note that, for $0 \leq x,y \leq 1$ we have that
$$1 - \log{(2 \sin{ \pi |x - y|})} \geq 1 - \log{2}$$
and so every term in the product is always positive. We also note that if two different points $x_i, x_j$ have the same $k-$th coordinate, then the functional is not defined and we set $E(X) = \infty$ in that case. In practice, we can always perturb points ever so slightly to avoid that scenario. We note that the functional has an interesting structure: it very much likes to avoid having too many points that have very similar coordinates. This makes sense since such points can be easily captured by a thin (hyper-)rectangle. We now first discuss how to actually minimize it in practice and then discuss our main result.

\begin{figure}[h!]
\begin{minipage}[l]{.49\textwidth}
\includegraphics[width = 5.5cm]{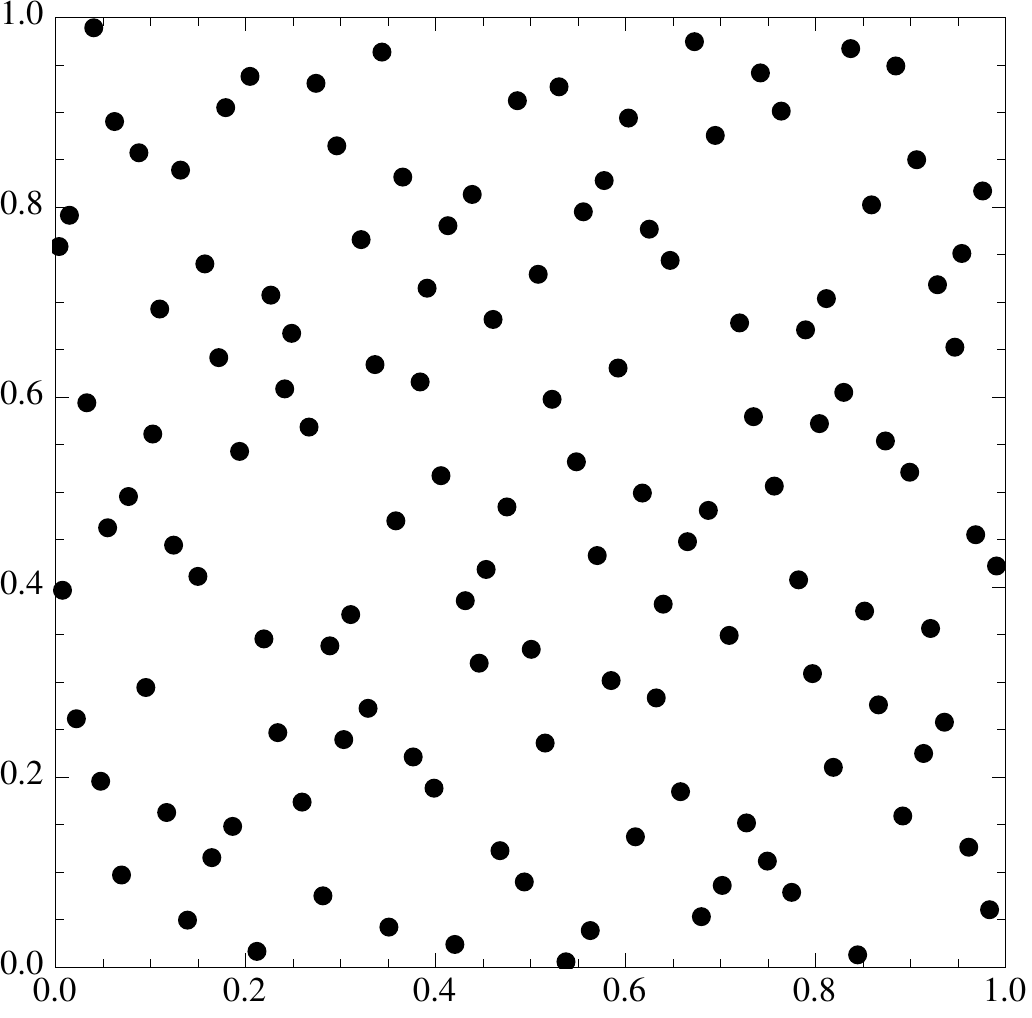} 
\end{minipage} 
\begin{minipage}[r]{.49\textwidth}
\includegraphics[width = 5.5cm]{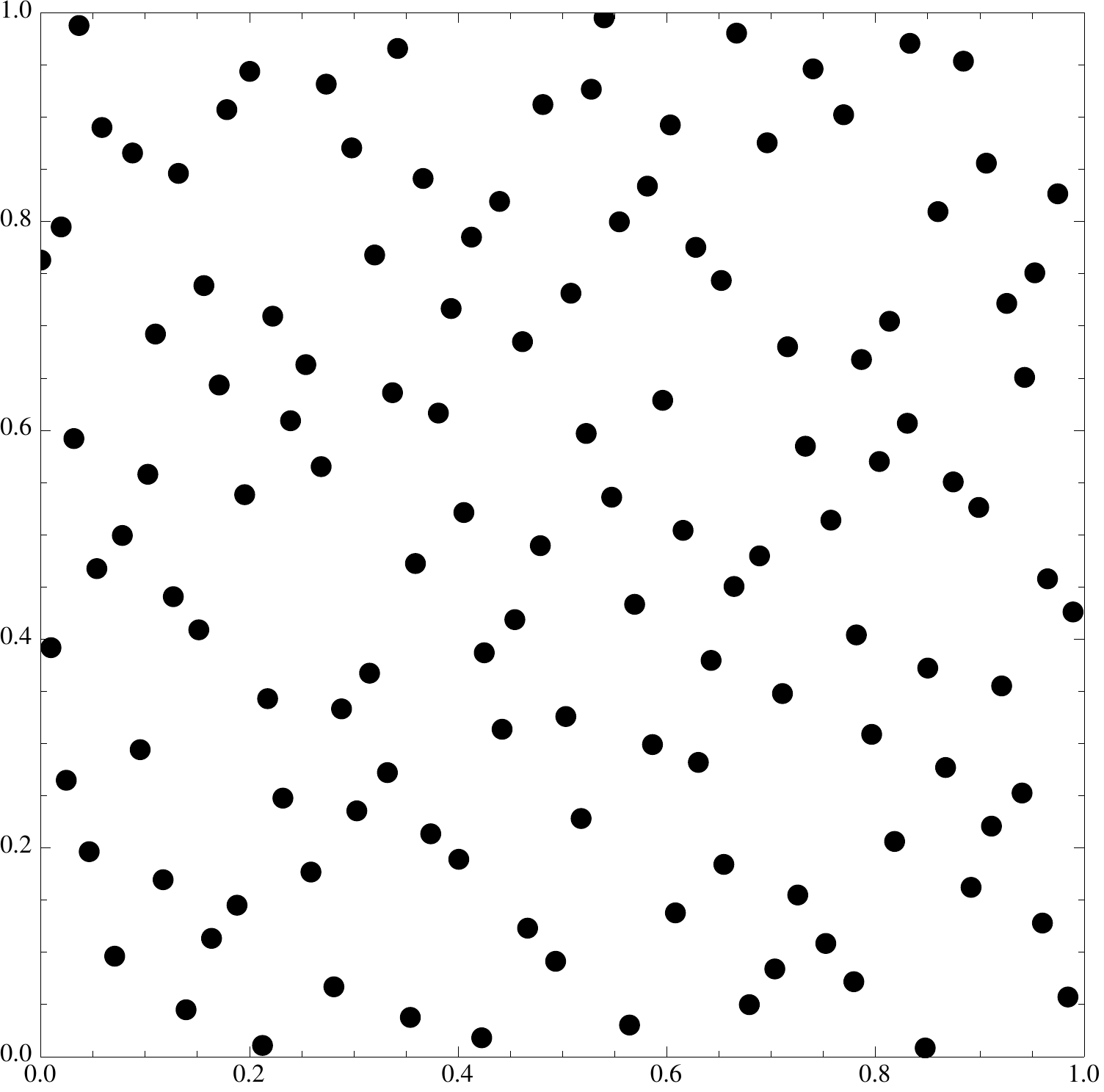} 
\end{minipage} 
\caption{Left: 128 points of the Halton sequence in base 2 and 3 having $D_N^* \sim 0.032$. Right: evolution of the gradient flow changes the set a tiny bit to one with discrepancy $D_N^* \sim 0.025$.}
\end{figure}

\subsection{How to compute things.}
We are using the standard gradient descent: if $f:\mathbb{R}^d \rightarrow \mathbb{R}$ is a differentiable function, gradient descent is trying to find a (local) minimum by defining an iterative sequence of points via
$$ x_{n+1} = x_{n} - \alpha \nabla f(x_n),$$
were $\alpha > 0$ is the step-size. This is exactly how we proceed as well.
The gradient $\nabla E$ can be computed explicitly and
$$ \frac{\partial E}{\partial x_{n, i}} =  \sum_{m=1 \atop m \neq n}^{N} \left( \prod_{k=1 \atop k \neq i}^{d}{ (1 - \log{\left(2 \sin{ \left( \pi |x_{m,k} - x_{n,k} |\right)} \right)})} \right) h(x_{n,i} - x_{m,i}),$$
where
$$ h(x) =  - \pi \cot{(x)} \mbox{sign}(x).$$
This allows us to compute
$$ \frac{\partial E}{\partial x_{n}} = \left( \frac{\partial E}{\partial x_{n, 1}}, \frac{\partial E}{\partial x_{n, 2}}, \dots, \frac{\partial E}{\partial x_{n, d}} \right)$$
which is the infinitesimal direction in which we have to move $x_n$ to get the largest increase in the energy functional. Since we are interested in decreasing it, we replace
$$ x_n \leftarrow x_n - \alpha  \frac{\partial E}{\partial x_{n}}.$$

The algorithm is somewhat sensitive to the choice of $\alpha$ (this is not surprising and a recurring theme for gradient methods): it has to be chosen so small that the first order approximation
is still somewhat valid, however, if it is chosen too small, then convergence becomes very slow and one needs more iterations to converge. In practice, for point sets containing $\sim 100$ points, we worked
with $\alpha \sim 10^{-5}$ which usually leads to a local minimum within less than a hundred iterations. The cost of computing a gradient step is of order $\mathcal{O}(N^2 d)$ when $N \geq d$ and thus not at all unreasonable. There are presumably ways of optimizing both the choice of $\alpha$ as well as the cost of computing the energy (say, by fast multipole techniques) but this is beyond the scope of this paper.

\begin{figure}[h!]
\begin{minipage}[l]{.49\textwidth}
\includegraphics[width = 5.5cm]{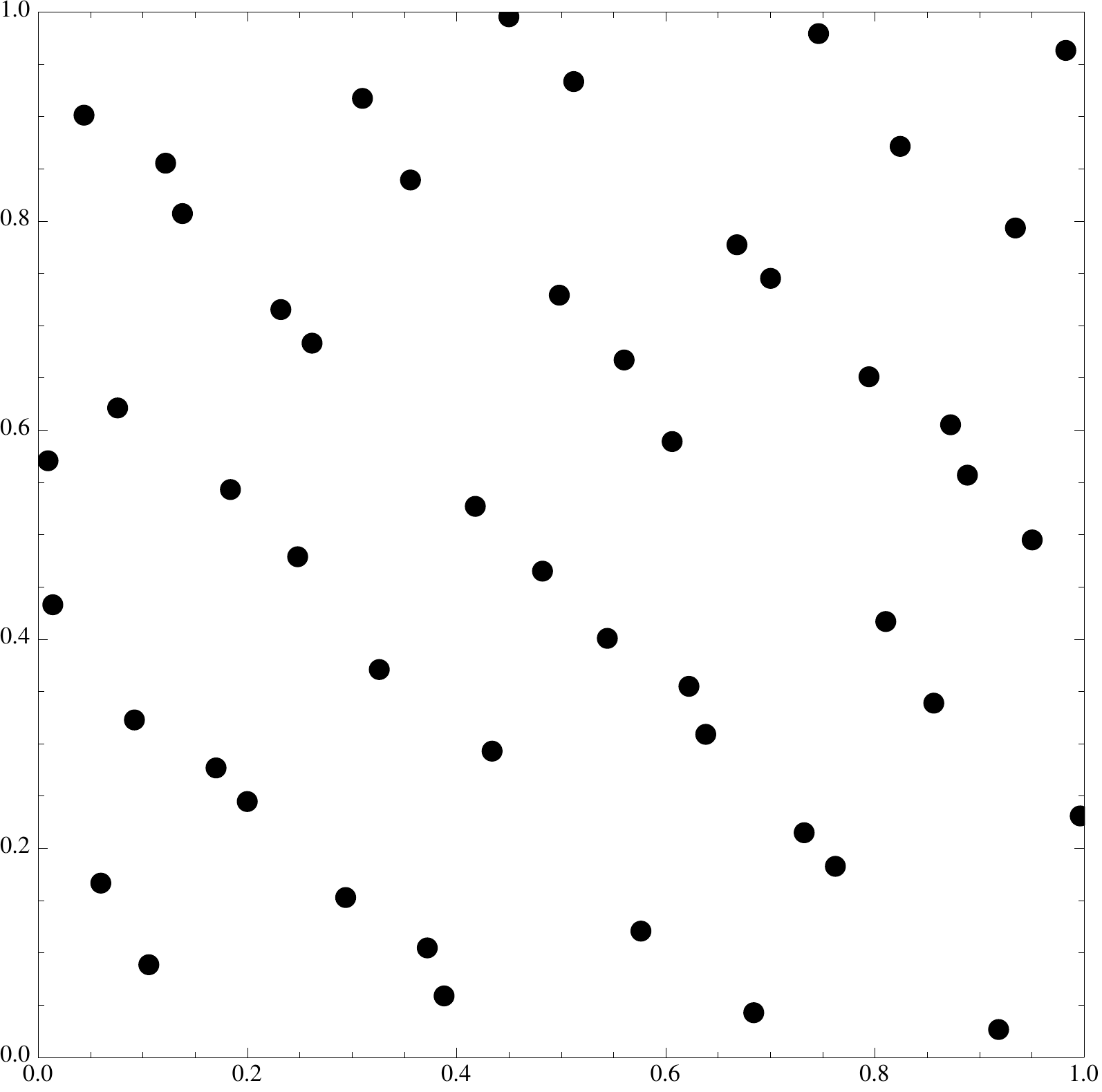} 
\end{minipage} 
\begin{minipage}[r]{.49\textwidth}
\includegraphics[width = 5.5cm]{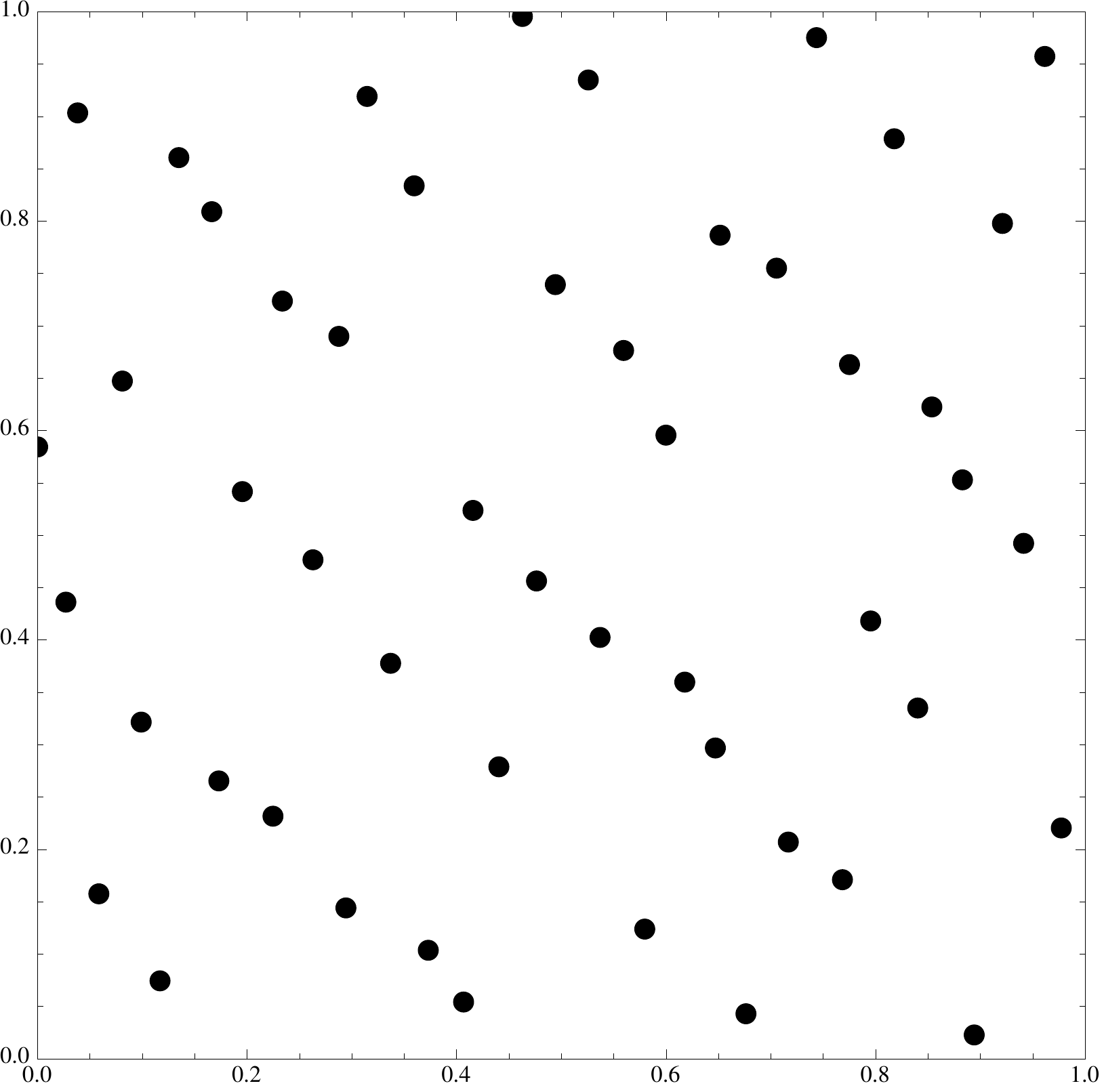} 
\end{minipage} 
\caption{Left: 50 points of a Sobol sequence with $D_N^* \sim 0.063$. Right: evolution of the flow leads to a set with $D_N^* \sim 0.057$.}
\end{figure}

\subsection{Lattices.}
We observe that if the initial point set is already very well distributed, then minimizing the energy tends to have very little effect on both the set and the discrepancy. There is one 
setting where this behavior is especially pronounced. We will consider lattice rules of the type
$$ X_N = \left\{ \left( \frac{n}{N}, \left\{ \frac{a n}{N}\right\} \right): 0 \leq n \leq N-1\right\},$$
where $a,N \in \mathbb{Z}$ are coprime and $\left\{ x \right\} = x - \left\lfloor x \right\rfloor$ is the fractional part. Lattice rules are classical examples of sequences with small discrepancy, we refer to \cite{dick, drmota, kuipers} and refer to \cite{fritz2, fritz3} for examples of more recent results.
\begin{thm}  Every lattice rule $X_N$ is a critical point of the energy functional. Moreover, if $a^2 \equiv 1~(\mbox{mod}~N)$, then $X_N$ is a strict local minimum.
\end{thm}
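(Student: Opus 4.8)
I work throughout in $d=2$, the dimension in which lattice rules are defined. Two preliminary remarks: first, $E$ is invariant under the simultaneous translation $x_n \mapsto x_n + c$, so ``strict local minimum'' has to be read modulo this two-parameter family (equivalently, after a normalization such as $x_1=0$); second, at a lattice rule every pairwise difference $x_{m,k}-x_{n,k}$ is a \emph{nonzero} multiple of $1/N$, so $E$ is smooth in a neighbourhood of $X_N$. For the critical-point claim I would start from the explicit gradient of \S 2.2: writing $f(u)=1-\log(2\sin\pi u)$, its $i=1$ component at $x_n$ is $\sum_{m\neq n} f(x_{m,2}-x_{n,2})\,h(x_{n,1}-x_{m,1})$, with $f$ even and $1$-periodic and $h$ odd and $1$-periodic. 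Since for the lattice $x_{m,k}-x_{n,k}$ depends only on $(m-n)\bmod N$, summing over $j=(m-n)\bmod N\in\{1,\dots,N-1\}$ gives $\partial E/\partial x_{n,1}=\sum_{j} f(aj/N)\,h(-j/N)$, a quantity independent of $n$; the substitution $j\mapsto N-j$ turns this sum into its own negative, so it vanishes, and likewise for $\partial E/\partial x_{n,2}$. Hence $\nabla E(X_N)=0$.

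For the local minimum I would compute the second variation. With $F(s,t)=f(s)f(t)$ and a perturbation $x_n\mapsto x_n+ty_n$, $y_n\in\mathbb{R}^2$,
$$ \frac{d^2}{dt^2}\Big|_{0} E(X_N+tY)=\sum_{m\neq n}(y_m-y_n)^{\top}\,\mathrm{Hess}\,F(x_m-x_n)\,(y_m-y_n), $$
a finite sum all of whose entries are evaluated away from the singular set of $f$. Grouping the pairs by $j=(m-n)\bmod N$ and diagonalizing the resulting block-circulant quadratic form in $n$ — writing $y_n=\sum_{\xi}\widehat y_\xi\,e^{2\pi i n\xi/N}$ — turns this into $N\sum_{\xi=0}^{N-1}\widehat y_\xi^{\,*}M_\xi\widehat y_\xi$, where the $2\times2$ real symmetric blocks are
$$ M_\xi=\sum_{j=1}^{N-1} 4\sin^{2}\!\Big(\tfrac{\pi j\xi}{N}\Big)\,A_j,\qquad A_j:=\mathrm{Hess}\,F\Big(\tfrac{j}{N},\tfrac{aj}{N}\Big). $$
Now $M_0=0$ because the weights vanish, and for $\xi\neq0$ the weight attached to $j=1$ is $4\sin^2(\pi\xi/N)>0$; so everything reduces to showing that each building block $A_j$ is positive definite.

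The positivity of $A_j$ is, I expect, the main obstacle. Its diagonal entries $f''(j/N)f(aj/N)$ and $f(j/N)f''(aj/N)$ are positive since $f>0$ and $f''>0$, so the point is $\det A_j>0$, i.e. $f''(x)f''(y)f(x)f(y)>f'(x)^2f'(y)^2$ with $x=j/N$, $y=aj/N$. Here I would use the identity $f''(t)=f'(t)^2+\pi^2$ (coming from $\cot^2=\csc^2-1$), which gives $f'(t)^2/f''(t)=\cos^2\pi t$ and reduces the determinant inequality to $f(x)f(y)>\cos^2\pi x\,\cos^2\pi y$; this in turn follows from the pointwise bound $f(t)>\cos^2\pi t$ on $(0,1)$, which is equivalent to $s^2-\log s>\log 2$ for $s=\sin\pi t\in(0,1]$ — true because the left-hand side attains its minimum $(1+\log 2)/2>\log 2$ at $s=1/\sqrt2$. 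With $A_j\succ 0$ for all $j$ one gets $M_\xi\succeq 4\sin^2(\pi\xi/N)A_1\succ 0$ for every $\xi\neq0$, so the second variation is $\geq 0$ and vanishes exactly when all $\widehat y_\xi=0$ for $\xi\neq0$, i.e. when $Y$ is an infinitesimal translation. Combined with the critical-point property and the smoothness of $E$ near $X_N$, this shows $X_N$ is a strict local minimum modulo translations.

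Everything apart from the block positivity is routine: the gradient symmetry, the Fourier diagonalization of the circulant form, and the passage from a Hessian positive definite transverse to the translations to an honest strict minimum on a transversal slice. I would add one remark: this argument never uses the hypothesis $a^2\equiv 1\ (\mathrm{mod}\ N)$ — it appears to show that \emph{every} lattice rule is a strict local minimum of $E$ modulo translations, slightly more than the stated theorem, the congruence condition presumably entering only in an alternative, more hands-on approach to the Hessian.
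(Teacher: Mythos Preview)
Your proof is correct and follows a genuinely different route from the paper's. The paper fixes all points but one, expands to second order in the displacement $(\varepsilon,\delta)$ of that single point, obtains three sums $(I),(II),(III)$, and controls the cross term $(II)$ via the AM--GM-type Lemma of \S4.1 together with the hypothesis $a^2\equiv1\pmod N$, the latter being needed to identify the two sums appearing on the right of that lemma. You instead perturb all points, recognise the full second variation as a block-circulant quadratic form, Fourier-diagonalise it into $2\times2$ blocks $M_\xi$, and reduce everything to the positive definiteness of each individual Hessian block $A_j=\mathrm{Hess}\,F(j/N,\{aj/N\})$. The key scalar ingredient is the same in both arguments --- your pointwise bound $f(t)>\cos^2\pi t$ is precisely what drives the paper's lemma --- but you deploy it more sharply, on the determinant of each $A_j$ rather than on an aggregated sum. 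As you note, your argument never uses $a^2\equiv1\pmod N$ and hence shows that \emph{every} lattice rule is a strict local minimum modulo translations; restricting your second variation to a single-point perturbation gives the Hessian $\sum_j A_j\succ0$ directly, which in fact resolves the question the paper leaves open at the end of \S4. Your approach is more general and structurally cleaner; the paper's is more elementary in that it avoids the discrete Fourier diagonalisation.
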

We understand critical point in the following sense: if we fix all but one point and then move the one point distance $\varepsilon$, then the energy changes by a factor proportional to $\varepsilon^2$. If $a^2 \equiv 1~(\mbox{mod}~N)$, then the energy changes like $\sim c \varepsilon^2$ for some $c>0$. Some restriction like this is clearly necessary since, if we move all the points by the same fixed vector, the energy remains unchanged. Nonetheless, we expect stronger statements to be true.
We also do not know whether the condition $a^2 \equiv 1~(\mbox{mod}~N)$ is necessary, it seems like it should not be; we comment on this at the end of the paper. Several of the classical point
sets (i.e. Sobol sequences) barely move under the gradient flow -- is it maybe true that many classical sequences have a local minimum nearby?

\begin{figure}[h!]
\begin{minipage}[l]{.49\textwidth}
\includegraphics[width = 5cm]{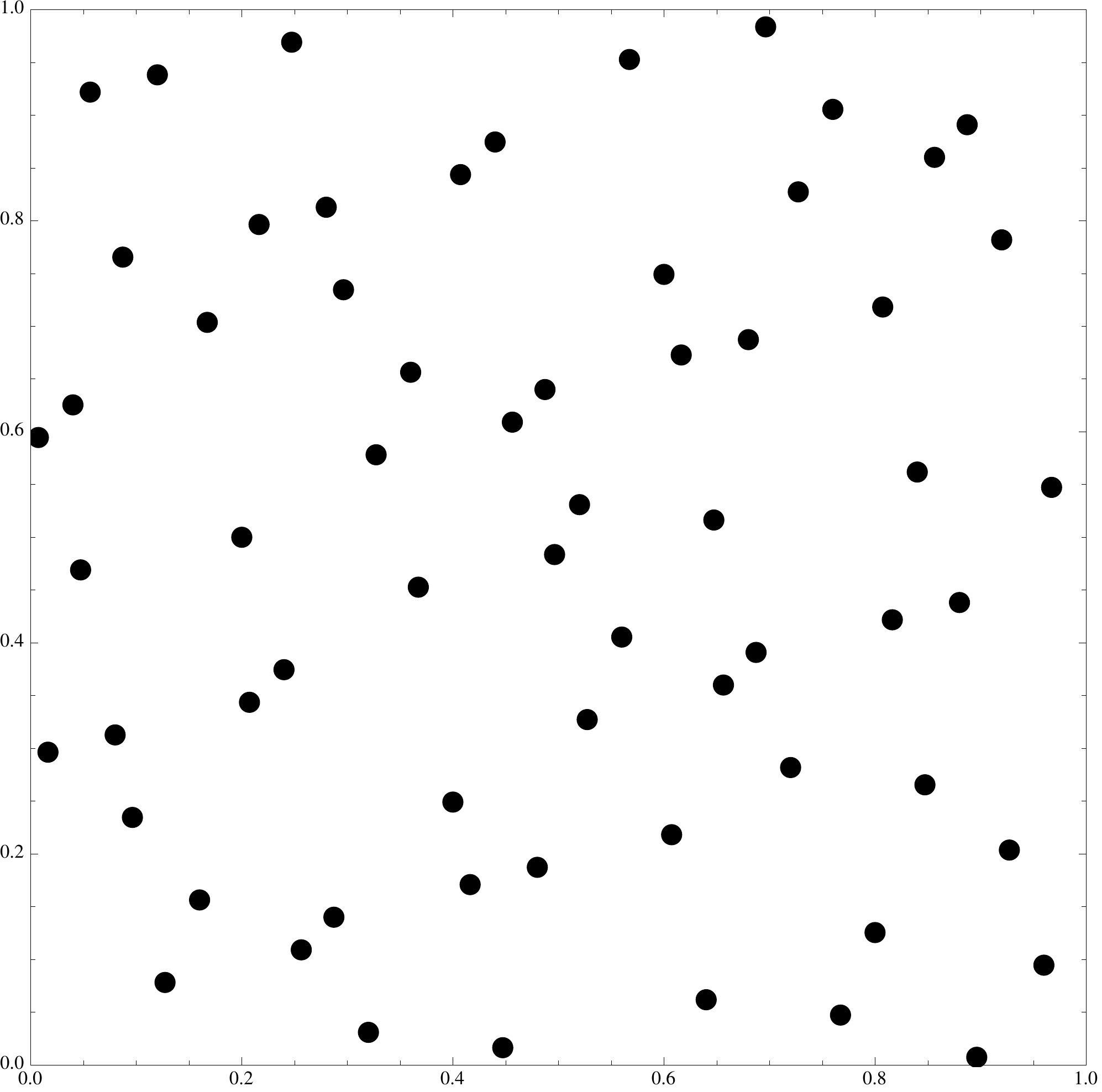} 
\end{minipage} 
\begin{minipage}[r]{.49\textwidth}
\includegraphics[width = 5cm]{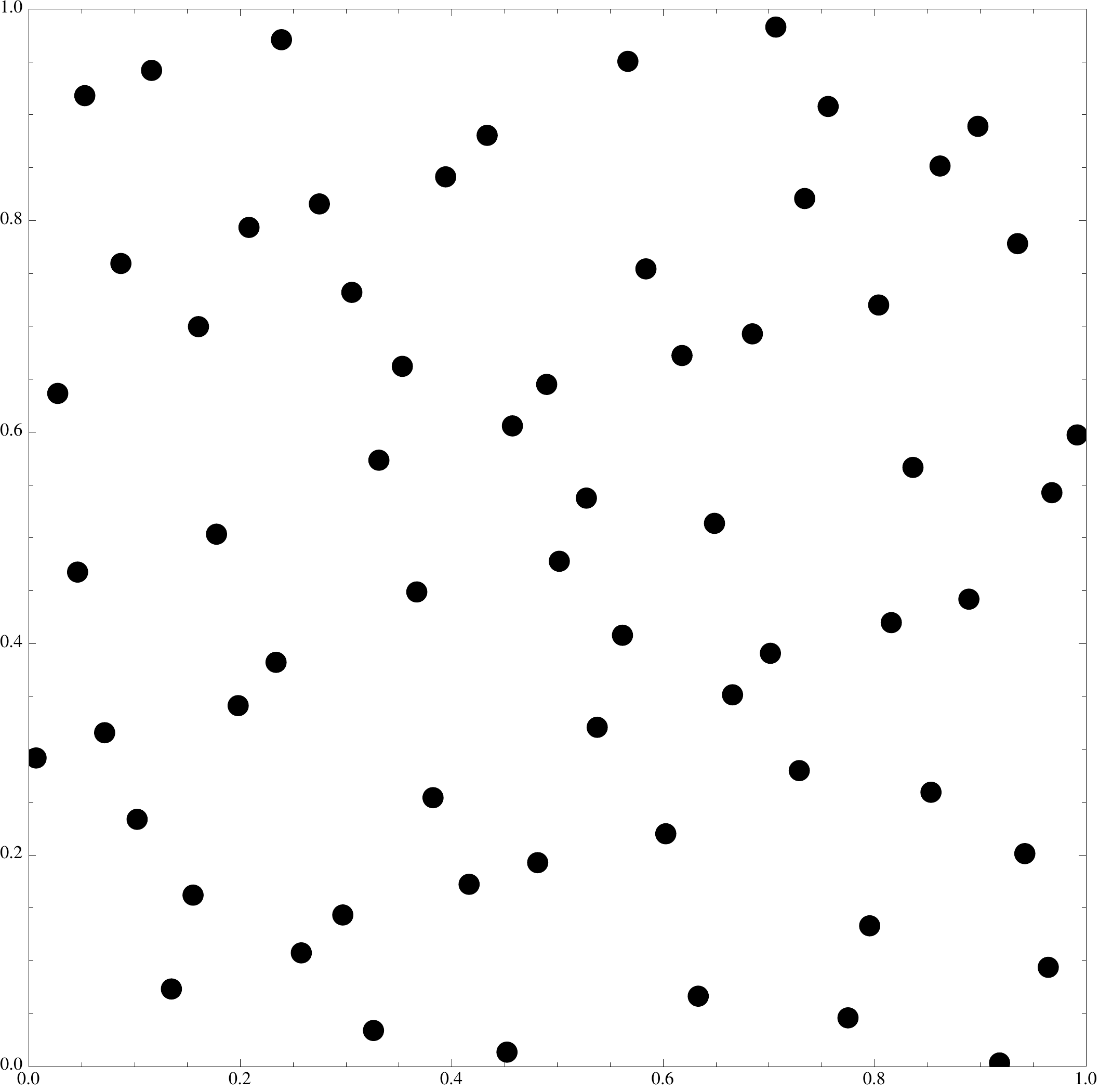} 
\end{minipage} 
\caption{Left: 64 Halton points (base 2, 5) with $D_N^* \sim 0.065$. Right: the gradient flow leads to a set with $D_N^* \sim 0.045$.}
\end{figure}

\subsection{Related functionals.} One question of obvious interest is whether there are related functionals. We point out that our functional is part of a natural 1-parameter family of functionals that are naturally defined via certain fractional integral operators. This is \textit{not} how our functional was originally derived (that derivation can be found in \S 3 and is based on the Erd\H{o}s-Turan inequality) but may provide an interesting avenue for further research. We note that our approach to the Erd\H{o}s-Turan inequality involves an application of a Cauchy-Schwarz inequality that could also be done in a different fashion and this would, somewhat naturally, lead to the inverse fractional Laplacian. We quickly introduces this fascinating object here and then mention explicitly in the proof how one could deviate from the derivation. A full exploration of this case is outside the scope of this paper.
If $f:\mathbb{T}^d \rightarrow \mathbb{R}$ is sufficiently smooth, then we can differentiate term by term and obtain, for any $s \in \mathbb{N}$,
$$ (-\Delta)^s f = \sum_{k \in \mathbb{Z}^d}{ \widehat{f}(k) e^{2\pi i \left\langle k, x \right\rangle}} =  \sum_{k \in \mathbb{Z}^d \atop k \neq 0}{(2\pi \|k\|)^{2s} \widehat{f}(k) e^{2\pi i \left\langle k, x \right\rangle}}.$$
However, as is easily seen, this definition actually makes sense for $s \in \mathbb{R}$: if $s$ is positive, then we require that $\widehat{f}(k)$ decays sufficiently quickly for the sum to be defined. If $s$ is negative, then it suffices to assume that $f\in L^2(\mathbb{T}^d)$ since $\| (-\Delta)^s f \|_{L^2} \leq \|f\|_{L^2}$ for all $s<0$ (we refer to \cite{luz} for an introduction into the fractional Laplacian on the Torus). We will now compute $(-\Delta)^{-1/2} \delta_0$, where $\delta_0$ is a Dirac measure in 0 in $\mathbb{T}$. We see that
\begin{align*}
 (-\Delta)^{-\frac12} \delta_0 &= \sum_{k \in \mathbb{Z} \atop k \neq 0}{(2\pi  \|k\|)^{-1} e^{2\pi i k x}} = \frac{1}{2\pi} \sum_{k \in \mathbb{Z} \atop k \neq 0}{\frac{e^{2\pi i k x}}{k}} \\
 &= \frac{1}{\pi} \sum_{k=1}^{\infty}{ \frac{\cos{(2 \pi k x)}}{k}} = - \frac{1}{\pi} \log{(2\sin{|\pi x|})}
\end{align*}
This is, up to a factor of $\pi$, exactly the factor arising in our computation. It is well understood that $s=-1/2$ is a special scale and that the fractional Laplacian has different behavior for $s<-1/2$ and $s>-1/2$ but it does suggest many other factors that can be computed in a similar way. It also suggests that it might be potentially worthwhile to study functionals of the type
$$ E(X) = \left\| \sum_{k=1}^{n} (-\Delta)^{s} \delta_{x_k} \right\|_{L^2(\mathbb{T}^d)}^2$$
which can be simplified
\begin{align*}
E(X) &= \left\langle \sum_{k=1}^{n} (-\Delta)^{s} \delta_{x_k}, \sum_{k=1}^{n} (-\Delta)^{s} \delta_{x_k} \right\rangle = \sum_{k, \ell=1}^{n}{ \left\langle (-\Delta)^{s} \delta_{x_k} , (-\Delta)^{s} \delta_{x_\ell} \right\rangle} \\
&= n \left\langle (-\Delta)^{s} \delta_{0}, (-\Delta)^{s} \delta_{0} \right\rangle + \sum_{k, \ell=1 \atop k \neq \ell}^{n}{ \left\langle (-\Delta)^{s} \delta_{x_k} , (-\Delta)^{s} \delta_{x_\ell} \right\rangle }
\end{align*}
Using self-adjointness of the inverse fractional Laplacian, we can simplify the relevant term as
\begin{align*}
 \sum_{k, \ell=1 \atop k \neq \ell}^{n}{ \left\langle (-\Delta)^{s} \delta_{x_k} , (-\Delta)^{s} \delta_{x_\ell} \right\rangle } &=  \sum_{k, \ell=1 \atop k \neq \ell}^{n}{ \left\langle (-\Delta)^{2s} \delta_{x_k} , \delta_{x_\ell} \right\rangle } =  \sum_{k, \ell=1 \atop k \neq \ell}^{n}{  \left( (-\Delta)^{2s} \delta_{0}\right)(x_k - x_{\ell})  }
\end{align*}
This, in turn, can be rewritten as 
$$  \sum_{k, \ell=1 \atop k \neq \ell}^{n}{  \left( (-\Delta)^{2s} \delta_{0}\right)(x_k - x_{\ell})  } = (2\pi)^{2s} \sum_{k, \ell=1 \atop k \neq \ell}^{n}{  \sum_{m \in \mathbb{Z}^d \atop m \neq 0}{ m^{2s} e^{2\pi i \left\langle m, x_k -x_{\ell}\right\rangle}}}$$
which, obviously, admits a gradient formulation. One could also consider a possible trunction in frequency followed by a gradient formulation as well as various mollification mechanism. We want to strongly suggest the possibility that the optimal value of $s$ for these kinds of methods may depend on the dimension.

\section{Heuristic Derivation of the Energy Functional}
We first give a one-dimensional argument to avoid notational overload and then derive the analogous quantity for higher dimensions in \S 3.2.
\subsection{One dimension.} Our derivation is motivated by the Erd\H{o}s-Turan inequality bounding the discrepancy $D_N$ of a set $\left\{x_1, \dots, x_N\right\} \subset [0,1]$ by
$$ D_N \lesssim \frac{1}{N} + \sum_{k=1}^{N}{ \frac{1}{k} \left| \frac{1}{N} \sum_{n=1}^{N}{ e^{2 \pi i k x_n} } \right|}.$$
We can bound this from above, using $x \leq (1+x^2)/2$ valid for all real $x$, by
$$ \sum_{k=1}^{N}{ \frac{1}{k} \left| \frac{1}{N} \sum_{n=1}^{N}{ e^{2 \pi i k x_n} } \right|} \leq \frac{1}{2}\sum_{k=1}^{N}{\left( \frac{1}{k}  \frac{1}{N} + \frac{1}{k} \frac{1}{N} \left| \sum_{n=1}^{N}{ e^{2 \pi i k x_n} } \right|^2 \right)}.$$
Using merely this upper bound, we want to make sure that the second term is small. This second term simplifies to 
$$ \frac{1}{N}\sum_{k=1}^{N}{\left(\frac{1}{k}  \left| \sum_{n=1}^{N}{ e^{2 \pi i k x_n} } \right|^2 \right)} = \frac{1}{N}\sum_{k=1}^{N}{\frac{1}{k}   \sum_{n, m=1}^{N}{ e^{2 \pi i k (x_n-x_m)} } }$$
Ignoring the scaling factor $N^{-1}$, we decouple into diagonal and off-diagonal terms and obtain 
$$ \sum_{k=1}^{N}{\frac{1}{k}   \sum_{n, m=1}^{N}{ e^{2 \pi i k (x_n-x_m)} } } =  \sum_{k=1}^{N}{\frac{N}{k}} +   \sum_{k=1}^{N}{\frac{1}{k}\sum_{m,n = 1 \atop m \neq n}^{N}{ \cos{(2 \pi k (x_m - x_n))}}}.$$
The first term is a fixed constant and thus independent of the actual points, the second sum can be written as
$$ \sum_{k=1}^{N}{\frac{1}{k}\sum_{m,n = 1 \atop m \neq n}^{N}{ \cos{(2 \pi k (x_m - x_n))}}} = \sum_{m,n = 1 \atop m \neq n}^{N}{  \sum_{k=1}^{N}{ \frac{ \cos{(2 \pi k (x_m - x_n))}}{k} }}.$$
The inner sum can now be simplified \cite{trig} by letting the limit go to infinity since
$$   \sum_{k=1}^{\infty}{ \frac{ \cos{(2 \pi k x)}}{k} } = - \log{(2 \sin{( \pi |x|)})}.$$
This suggests that we should really try to minimize the functional
$$ E(X) = \sum_{m,n = 1 \atop m \neq n}^{N}{  - \log{(2 \sin{( \pi |x_m - x_n|)})}}.$$

\textbf{Remark.} There is one step in the derivation where we could have argued somewhat differently: we could have written, for any $0 < \gamma < 1$,
\begin{align*}
\sum_{k=1}^{N}{ \frac{1}{k} \left| \frac{1}{N} \sum_{n=1}^{N}{ e^{2 \pi i k x_n} } \right|} &= \sum_{k=1}^{N}{ \frac{1}{k^{1-\gamma}}  \frac{1}{k^{\gamma}}\left| \frac{1}{N} \sum_{n=1}^{N}{ e^{2 \pi i k x_n} } \right|} \\
&\leq \left(\sum_{k=1}^{N}{ \frac{1}{k^{2-2\gamma}}} \right)^{1/2} \frac{1}{N} \left( \sum_{k=1}^{N}{ \frac{1}{k^{2\gamma}} \left| \sum_{n=1}^{N}{ e^{2 \pi i k x_n} } \right|^2} \right)^{1/2}.
\end{align*}
The first sum simplifies to either to $\sim N^{\gamma - 1/2}$ (for $\gamma > 1/2$), to $\sim \log{N}$ (for $\gamma =1/2$) or to $\sim 1$ (for $\gamma < 1/2$). The second term simplifies, after squaring the inner term and taking the limit of $N \rightarrow \infty$ over the Fourier series, to the definition of the fractional Laplacian $(-\Delta)^{-\gamma}$ (see \S 2.4.) applied to the measure $\sum_{k=1}^{N}{\delta_{x_k}}$.

\subsection{Higher dimensions.} The general case follows from the Erd\H{o}s-Turan-Koksma \cite{erd1,erd2,koksma} inequality and the heuristic outlined above for the one-dimensional case. We recall that the Erd\H{o}s-Turan-Koksma inequality allows us to bound the discrepancy of a set $\left\{x_1, \dots, x_N\right\} \subset [0,1]^d$ by

$$ D_N \lesssim_d \frac{1}{M + 1} + \sum_{\|k\|_{\infty} \leq M}{ \frac{1}{r(k)} \frac{1}{N} \left| \sum_{\ell=1}^{N}{e^{2\pi i \left\langle k, x_{\ell}  \right\rangle}} \right|},$$
where $r:\mathbb{Z}^d \rightarrow \mathbb{N}$ is given by
$$ r(k) = \prod_{j=1}^{d}{\max\left\{1, |k_j| \right\}}.$$
We note that, since $r(k) \leq r(2k) \leq 2^d r(k)$, we can change $r(k)$ to $r(2k)$ at merely the cost of a constant depending only on the dimension and thus
\begin{align*}
 \sum_{\|k\|_{\infty} \leq M}{ \frac{1}{r(k)} \frac{1}{N} \left| \sum_{\ell=1}^{N}{e^{2\pi i \left\langle k, x_{\ell}  \right\rangle}} \right|} &\lesssim_d  \sum_{\|k\|_{\infty} \leq M}{ \frac{1}{r(2k)} \frac{1}{N}} \\
&+\sum_{\|k\|_{\infty} \leq M}{ \frac{1}{r(2k)} \frac{1}{N} \left| \sum_{\ell=1}^{N}{e^{2\pi i \left\langle k, x_{\ell}  \right\rangle}} \right|^2}.
\end{align*}
The second sum we can expand into
$$ \sum_{\|k\|_{\infty} \leq M}{ \frac{1}{r(2k)} \frac{1}{N} \left| \sum_{\ell=1}^{N}{e^{2\pi i \left\langle k, x_{\ell}  \right\rangle}} \right|^2} = \frac{1}{N}\sum_{m,n = 1}^{N} \prod_{j=1}^{d} \left( 1 + \sum_{k=-M \atop k \neq 0}^{M}{\frac{1}{2|k|}  e^{2\pi i k (x_{m,j} - x_{n,j})}} \right).$$
Letting $M \rightarrow \infty$, we can simplify every one of these terms to
$$\sum_{k \in \mathbb{Z} \atop k \neq 0}^{\infty}{\frac{e^{2\pi i k (x_{m,j} - x_{n,j})}}{2|k|}  } =  -  \log{(2 \sin{(\pi |x_{m,j} - x_{n,j}|)})}$$
and we obtain the general form of the energy functional.

\begin{figure}[h!]
\begin{minipage}[l]{.49\textwidth}
\includegraphics[width = 5cm]{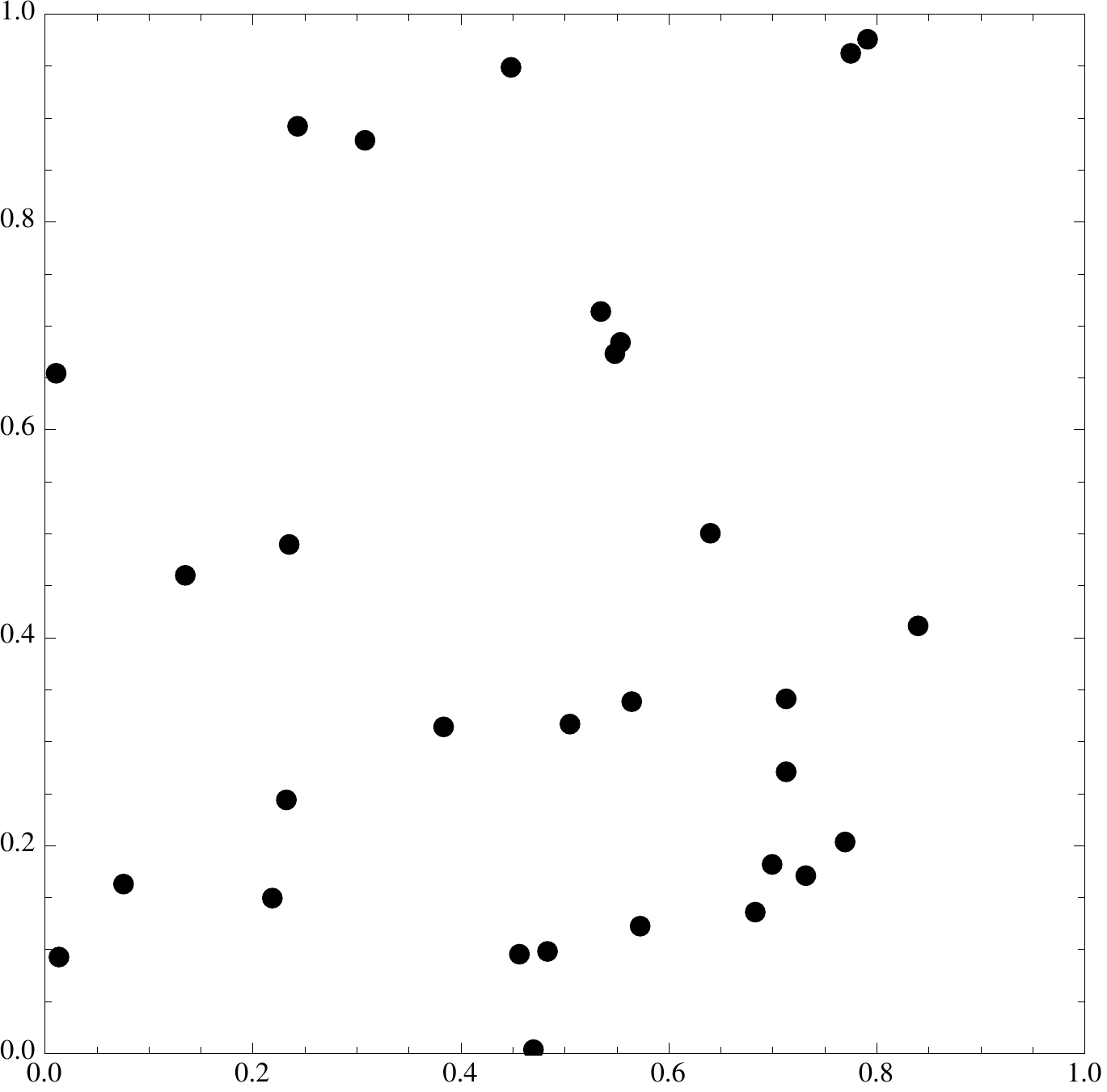} 
\end{minipage} 
\begin{minipage}[r]{.49\textwidth}
\includegraphics[width = 5cm]{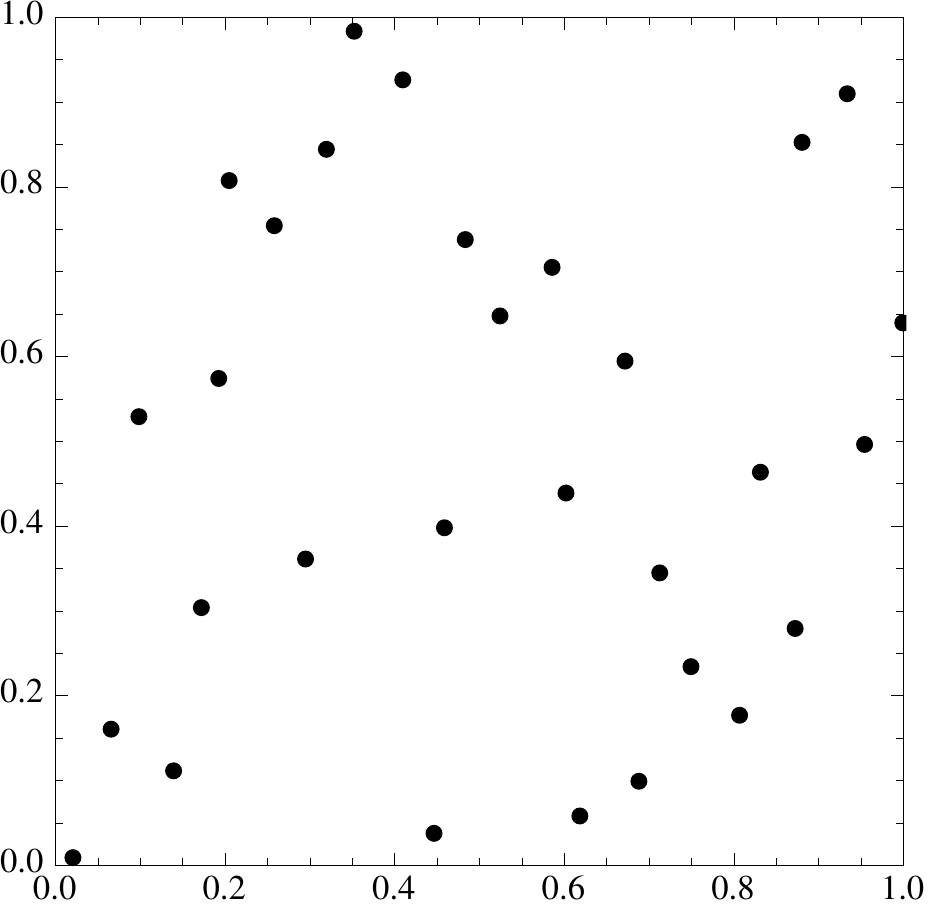} 
\end{minipage} 
\caption{Left: 100 random points with $D_N^* \sim 0.12$. Right: evolution of the gradient flow leads to a set with $D_N^* \sim 0.05$.}
\end{figure}

The Erd\H{o}s-Turan-Koksma inequality shows
$$ D_N \lesssim_d \frac{1}{M + 1} + \sum_{\|k\|_{\infty} \leq M}{ \frac{1}{r(k)} \frac{1}{N} \left| \sum_{\ell=1}^{N}{e^{2\pi i \left\langle k, x_{\ell}  \right\rangle}} \right|}.$$
We know that the best possible behavior is on the scale of $D_N \lesssim (\log{N})^{d-1} N^{-1}$ (or possibly even smaller). This suggests that the exponential sums cannot typically be that large, it should be roughly at scale $\sim 1$ most of the time. Understanding this better could lead
to precise estimates comparing how much our energy exceeds the discrepancy.
We conclude by establishing a rigorous bound.
\begin{lem} We have, for $X=\left\{x_1, \dots, x_N \right\} \subset \mathbb{T}^d$,
$$\sum_{\|k\|_{\infty} \leq N}{ \frac{1}{r(k)}\left| \sum_{\ell=1}^{N}{e^{2\pi i \left\langle k, x_{\ell}  \right\rangle}} \right|^2} \lesssim_d E(X)$$
\end{lem}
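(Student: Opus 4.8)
The plan is to expand the square, exploit the fact that $r$ factors over coordinates, and thereby reduce the estimate to a single one-dimensional pointwise inequality. Expanding $\bigl|\sum_{\ell=1}^{N} e^{2\pi i \langle k, x_\ell\rangle}\bigr|^{2} = \sum_{m,n=1}^{N} e^{2\pi i \langle k, x_m - x_n\rangle}$ and interchanging the two (finite) sums, one obtains
$$ \sum_{\|k\|_{\infty} \le N} \frac{1}{r(k)} \left| \sum_{\ell=1}^{N} e^{2\pi i \langle k, x_\ell\rangle}\right|^{2} \;=\; \sum_{m,n=1}^{N} \prod_{j=1}^{d} g_N\bigl(x_{m,j}-x_{n,j}\bigr), \qquad g_N(y) := 1 + 2\sum_{l=1}^{N}\frac{\cos(2\pi l y)}{l}, $$
because $\sum_{\|k\|_{\infty}\le N} r(k)^{-1} e^{2\pi i \langle k,y\rangle} = \prod_{j=1}^{d}\bigl(1 + 2\sum_{l=1}^{N} l^{-1}\cos(2\pi l y_j)\bigr)$. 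If two distinct points of $X$ share a coordinate then $E(X)=\infty$ and there is nothing to prove, so we may assume every difference $x_{m,j}-x_{n,j}$ with $m\neq n$ is a non-integer.

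The heart of the matter is a bound on $g_N$ that is uniform in $N$: there is an absolute constant $C$ such that
$$ |g_N(y)| \;\le\; C\bigl(1 - \log(2\sin(\pi\|y\|))\bigr) \qquad\text{for all } N\ge 1 \text{ and all } y\notin\mathbb{Z}, $$
where $\|y\|$ denotes the distance from $y$ to $\mathbb{Z}$ (so $\|y\|\le \tfrac12$). To prove this I would split $\sum_{l=1}^{N} l^{-1}\cos(2\pi l y)$ at $L:=\lceil \|y\|^{-1}\rceil$: for $l\le L$ the trivial estimate $|\cos|\le 1$ contributes at most $\sum_{l\le L} l^{-1} \le 1+\log L \lesssim 1+\log(1/\|y\|)$, while for $l>L$ summation by parts against the Dirichlet-type bound $\bigl|\sum_{l=a+1}^{m}\cos(2\pi l y)\bigr| \le (\sin\pi\|y\|)^{-1} \le (2\|y\|)^{-1}$ contributes $\lesssim (L\|y\|)^{-1} = O(1)$. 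Hence $|g_N(y)| \lesssim 1+\log(1/\|y\|)$ uniformly in $N$. Since $2\|y\|\le \sin(\pi\|y\|)\le \pi\|y\|$ one checks $1-\log(2\sin(\pi\|y\|)) \asymp 1+\log(1/\|y\|)$ with absolute constants (near $\|y\|=\tfrac12$ one also uses $1-\log(2\sin(\pi\|y\|))\ge 1-\log 2>0$), and the displayed inequality follows.

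The Lemma now follows by bookkeeping. The diagonal terms $m=n$ contribute $N\,g_N(0)^{d}$ with $g_N(0)=1+2\sum_{l=1}^{N} l^{-1} \lesssim \log N$, hence at most $\lesssim_d N(\log N)^{d} \lesssim_d N^{2}$; and since each of the $N(N-1)$ factors of $E(X)$ is $\ge 1-\log 2>0$ we have $E(X)\ge N(N-1)(1-\log 2)^{d}$, so for $N\ge 2$ the diagonal is $\lesssim_d E(X)$. For the off-diagonal part the triangle inequality, the pointwise bound applied in each coordinate, and $\sin(\pi|t|)=\sin(\pi\|t\|)$ give
$$ \sum_{m\neq n} \prod_{j=1}^{d} g_N\bigl(x_{m,j}-x_{n,j}\bigr) \;\le\; \sum_{m\neq n} \prod_{j=1}^{d} \bigl|g_N\bigl(x_{m,j}-x_{n,j}\bigr)\bigr| \;\le\; C^{d} \sum_{m\neq n} \prod_{j=1}^{d}\bigl(1-\log(2\sin(\pi|x_{m,j}-x_{n,j}|))\bigr) \;=\; C^{d} E(X). $$
Combining the diagonal and off-diagonal bounds yields $\sum_{\|k\|_{\infty}\le N} r(k)^{-1}\bigl|\sum_\ell e^{2\pi i\langle k,x_\ell\rangle}\bigr|^{2} \lesssim_d E(X)$.

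The main obstacle is the uniform-in-$N$ estimate on $g_N$: it amounts to controlling how much the partial sums of $\sum_l l^{-1}\cos(2\pi l y)$ can overshoot their limit $-\log(2\sin\pi\|y\|)$, and this is most delicate when $\|y\|$ is small, where the partial sums grow like $\log N$ until $l$ reaches $\approx\|y\|^{-1}$ — exactly matching the size of the limit. Everything else is routine; in particular the sign indefiniteness of $g_N$ is harmless because it is absorbed by the triangle inequality before the pointwise bound is applied, and the diagonal is dispatched crudely through $E(X)\gtrsim_d N^{2}$.
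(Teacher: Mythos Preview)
Your argument is correct and follows the same overall strategy as the paper: expand the square, factor over coordinates via the multiplicative structure of $r(k)$, and reduce everything to a uniform-in-$N$ bound on the one-dimensional partial sums $\sum_{l=1}^{N} l^{-1}\cos(2\pi l y)$. The only substantive difference is how that one-dimensional bound is established. You split at $L\approx\|y\|^{-1}$, use the trivial bound below $L$, and invoke the Dirichlet-kernel estimate together with summation by parts above $L$; this is the textbook route and gives a two-sided bound $|g_N(y)|\lesssim 1-\log(2\sin\pi\|y\|)$. The paper instead applies Abel summation in the form of a continuous integral and then an alternating-series argument to get the upper bound $\max_n \sum_{k\le n}k^{-1}\cos(2\pi kx)\lesssim 1-\log|\sin\pi x|$. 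Your version is arguably more elementary and, because it controls $|g_N|$ rather than only an upper bound, it makes the subsequent step (taking absolute values inside the product before estimating each factor) completely transparent. You also spell out the diagonal contribution and the lower bound $E(X)\gtrsim_d N(N-1)$, which the paper leaves implicit; this is routine but worth having on the page.
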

\begin{proof} The argument outlined above already establishes the result except for one missing ingredient: for all $0 < x < 1$, there is a uniform bound
$$ \max_{n \in \mathbb{N}} \sum_{k=1}^{n}{\frac{\cos{(2\pi k x)}}{k}} \lesssim 1-\log{|\sin{(\pi x)}|}.$$
We can assume w.l.o.g. that $0 < x < 1/2$. We use Abel summation to write
\begin{align*}
\sum_{k=1}^{n}{\frac{\cos{(2\pi k x)}}{k}} &= (n+1)\frac{\cos{(2\pi n x)}}{n} \\
&+ \int_{1}^{n}{ \left\lfloor k+1 \right\rfloor \left( \frac{\cos{(2 \pi k x)}}{k^2} + \frac{ 2\pi x  \sin{(2\pi k x)}}{k} \right) dk}.
\end{align*}
The first term is $\mathcal{O}(1)$, it remains to treat the integral. The first term has the structure of an alternating Leibniz series with
the first root being at $kx = 1/4$. Thus
\begin{align*}
 \int_{1}^{n}{ \left\lfloor k+1 \right\rfloor  \frac{\cos{(2 \pi k x)}}{k^2} dk} &\lesssim \int_{1}^{1/(4x)}{ \left\lfloor k+1 \right\rfloor  \frac{\cos{(2 \pi k x)}}{k^2} dk} \\
&\lesssim \int_{1}^{1/(4x)}{  \frac{\cos{(2 \pi k x)}}{k} dk} \lesssim \log{(1/x)}.
\end{align*}
The second integral simplifies to
$$ \int_{1}^{n}{ \left\lfloor k+1 \right\rfloor  \frac{ 2\pi x  \sin{(2\pi k x)}}{k}  dk} = 2\pi x \int_{1}^{n}{ \left\lfloor k+1 \right\rfloor  \frac{\sin{(2\pi k x)}}{k}  dk} \lesssim 1.$$
\end{proof}

\begin{figure}[h!]
\begin{minipage}[l]{.49\textwidth}
\includegraphics[width = 5.5cm]{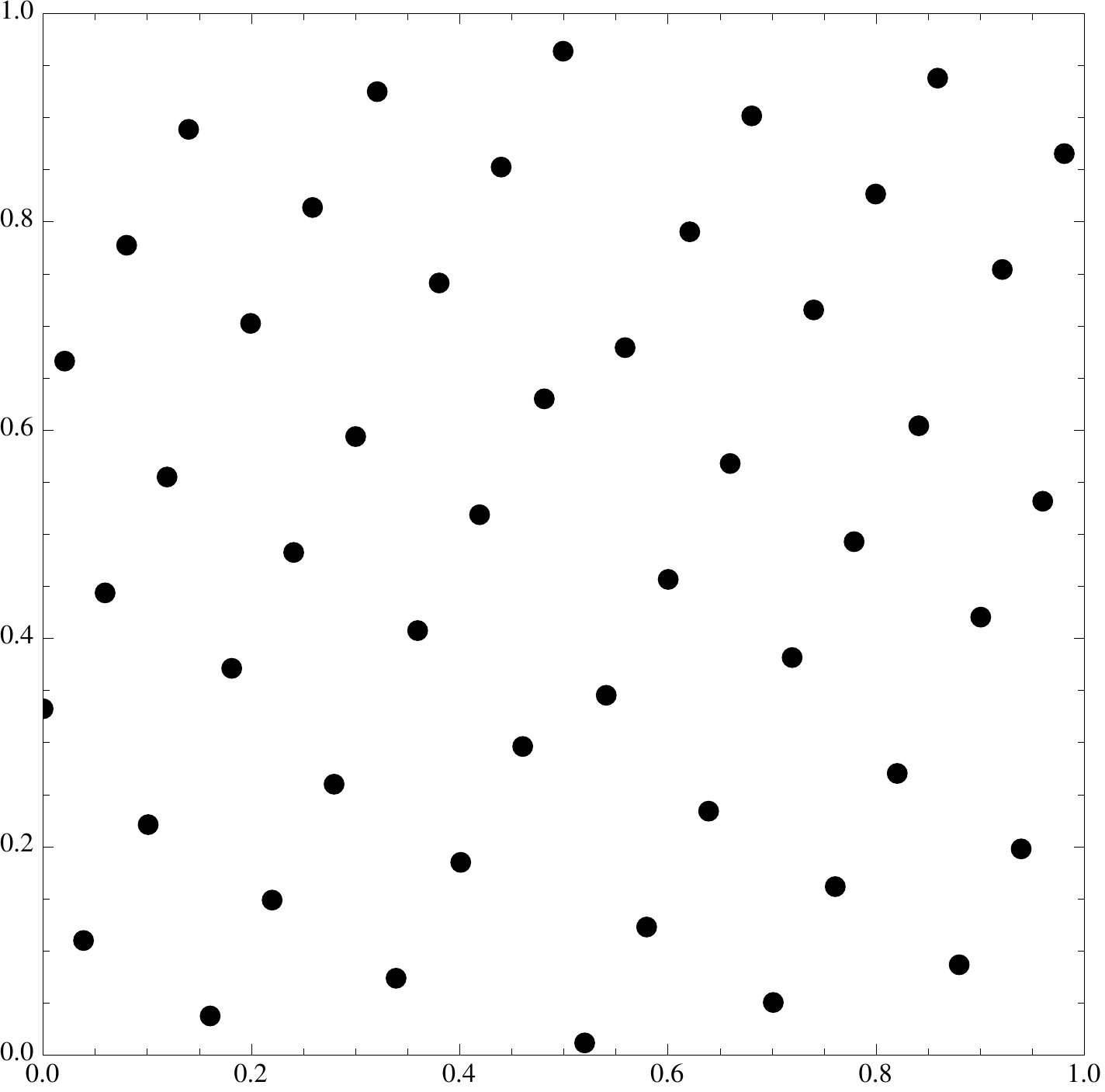} 
\end{minipage} 
\begin{minipage}[r]{.49\textwidth}
\includegraphics[width = 5.5cm]{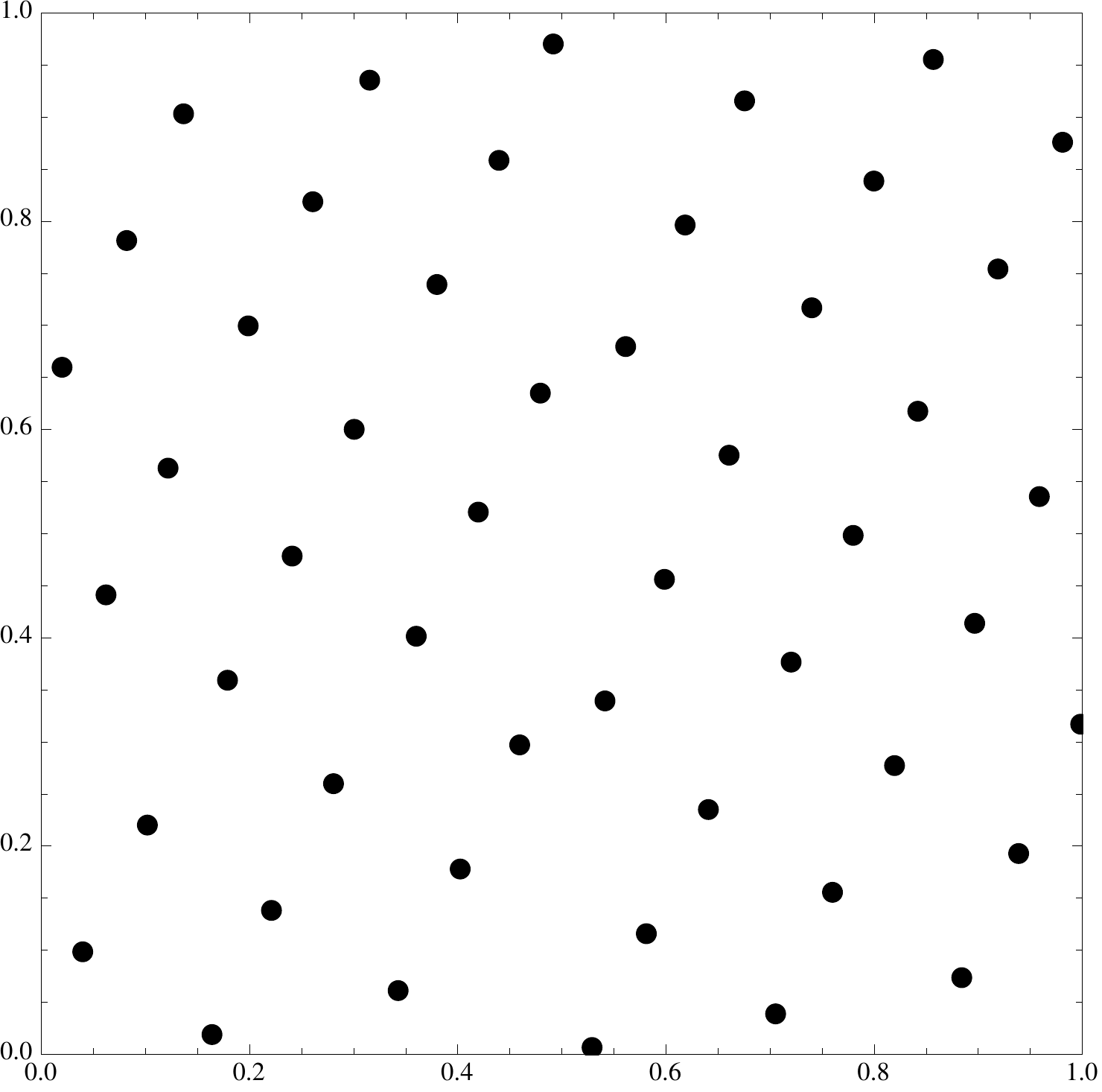} 
\end{minipage} 
\caption{Left: 50 points of the Hammersley sequence in base 3 with $D_N^* \sim 0.064$. Right: evolution of the flow leads to a set with $D_N^* \sim 0.042$.}
\end{figure}

\subsection{The case $d=1$.} Things are usually simpler in one dimension (though also less interesting because the optimal constructions are trivial and
given by equispaced points). We have the following basic result.

\begin{proposition} Let $(x_n)$ be a sequence in $\mathbb{T} \cong [0,1]$. If
$$ \limsup_{N \rightarrow \infty} \frac{1}{N^2} \sum_{1 \leq m \neq n \leq N}{ (1 - \log{(2 \sin{(\pi |x_m - x_n|)})})} = 1,$$
then the sequence is uniformly distributed.
\end{proposition}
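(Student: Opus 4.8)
The plan is to deduce the statement from weak-$*$ compactness together with the positive-definiteness of the kernel $g(x) := 1 - \log(2\sin(\pi|x|))$ on $\mathbb{T}$, which is exactly the summand of the functional. Set $A_N := N^{-2}\sum_{1 \le m \ne n \le N} g(x_m - x_n)$, so that the hypothesis reads $\limsup_{N\to\infty} A_N = 1$. First note that the hypothesis forces the points $x_n$ to be pairwise distinct, since otherwise $A_N = +\infty$ for all large $N$ and the $\limsup$ could not be finite. The only analytic fact needed is the one already recorded in the paper, $\sum_{k \ge 1} k^{-1}\cos(2\pi k x) = -\log(2\sin(\pi|x|))$, which identifies the Fourier coefficients of $g$: $\widehat g(0) = 1$ and $\widehat g(k) = \tfrac{1}{2|k|} \ge 0$ for $k \ne 0$. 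In particular $g$ is of positive type, and since $g(x) \ge 1 - \log 2 > 0$ it is bounded below.

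I would argue by contradiction. Assume $(x_n)$ is not uniformly distributed; then the empirical measures $\mu_N := N^{-1}\sum_{n \le N}\delta_{x_n}$ do not converge weak-$*$ to Lebesgue measure $\lambda$ on $\mathbb{T}$, so by weak-$*$ sequential compactness of the probability measures on the compact group $\mathbb{T}$ there is a subsequence with $\mu_{N_j} \to \nu$ weak-$*$ and $\nu \ne \lambda$. The heart of the argument is the lower bound $\liminf_{j} A_{N_j} \ge E(\nu)$, where $E(\nu) := \int_{\mathbb{T}^2} g(x-y)\, d\nu(x)\, d\nu(y) \in [1-\log 2, +\infty]$. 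To get it, for $0 < \epsilon < \tfrac12$ replace $g$ by its truncation $g^{(\epsilon)} := \min(g, g(\epsilon))$, a continuous function on $\mathbb{T}$ with $g^{(\epsilon)} \le g$; then
\[
A_N \ \ge\ \frac{1}{N^2}\sum_{m \ne n} g^{(\epsilon)}(x_m - x_n) \ =\ \int_{\mathbb{T}^2} g^{(\epsilon)}(x-y)\, d\mu_N(x)\, d\mu_N(y) \ -\ \frac{g^{(\epsilon)}(0)}{N}.
\]
Along $N_j$ the right-hand side converges to $\int_{\mathbb{T}^2} g^{(\epsilon)}(x-y)\, d\nu(x)\, d\nu(y)$, because $\mu_{N_j} \times \mu_{N_j} \to \nu \times \nu$ weak-$*$ on $\mathbb{T}^2$ and $(x,y) \mapsto g^{(\epsilon)}(x-y)$ is continuous, while $g^{(\epsilon)}(0)/N_j \to 0$. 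Letting $\epsilon \downarrow 0$ and invoking monotone convergence ($g^{(\epsilon)} \uparrow g$, uniformly bounded below) gives $\liminf_j A_{N_j} \ge E(\nu)$. Since $(A_{N_j})$ is a subsequence of $(A_N)$, this yields $E(\nu) \le \liminf_j A_{N_j} \le \limsup_N A_N = 1$.

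It remains to show that $E(\nu) \ge 1$ for every probability measure $\nu$ on $\mathbb{T}$, with equality only if $\nu = \lambda$; this contradicts $E(\nu) \le 1$ and $\nu \ne \lambda$, completing the proof. This is the classical statement that the periodic logarithmic energy is minimized, uniquely, by normalized arclength measure, the content being the identity
\[
E(\nu) \ =\ 1 + \sum_{k \ne 0} \frac{|\widehat\nu(k)|^2}{2|k|}.
\]
For $\nu$ with a smooth density this is immediate from Parseval's identity. For general (possibly singular) $\nu$ I would mollify with the Poisson kernel $P_r$: Parseval applied to the smooth measure $\nu * P_r$ gives $E(\nu * P_r) = 1 + \sum_{k \ne 0} r^{2|k|}|\widehat\nu(k)|^2/(2|k|)$, which increases to the right-hand side above as $r \uparrow 1$; on the other hand $E(\nu * P_r) = \int_{\mathbb{T}} (g * \sigma)(t)\, P_{r^2}(t)\, dt$ with $\sigma := \nu * \widetilde\nu$, and since $g * \sigma$ has nonnegative Fourier coefficients it is maximized at the origin, where its value is $E(\nu)$, so $E(\nu * P_r) \le E(\nu)$; combining the two estimates gives $1 + \sum_{k \ne 0} |\widehat\nu(k)|^2/(2|k|) \le E(\nu) \le 1$, which forces $\nu = \lambda$. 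Making this last passage from smooth to arbitrary probability measures fully rigorous (equivalently, pinning down that the periodic logarithmic energy is $\ge 1$ with equality only at $\lambda$) is the one step I expect to require genuine care; everything preceding it is routine manipulation of weak-$*$ limits and Weyl's criterion. One may instead simply quote the fact from potential theory that the circle has logarithmic capacity $1$ and a unique equilibrium measure.
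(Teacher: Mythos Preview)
Your argument is correct and takes a genuinely different route from the paper's. The paper regularizes the empirical measure by convolution with the Jacobi theta function (heat kernel), writes the resulting energy $h(t)$ as $\sum_{k}\widehat g(k)\,e^{-ck^{2}t}\bigl|\sum_{n} e^{2\pi i k x_n}\bigr|^{2}$ via Plancherel, observes that $h$ decreases to $\widehat g(0)N^{2}=N^{2}$ as $t\to\infty$, and then argues directly from Weyl's criterion: if equidistribution fails, some fixed exponential sum is $\gtrsim N$ along a subsequence, forcing $h(1)\ge(1+\delta)N^{2}$ infinitely often, which is then played off against the hypothesis. You instead extract a weak-$*$ limit $\nu\ne\lambda$ of the normalized empirical measures, push the energy through the limit by truncating the kernel to $g^{(\epsilon)}=\min(g,g(\epsilon))$, and finish by invoking the potential-theoretic fact that Lebesgue measure is the unique minimizer of the periodic logarithmic energy.

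Both arguments ultimately rest on the same positivity $\widehat g(k)>0$. The paper's approach stays closer to Weyl's criterion and avoids any appeal to potential theory, but is presented as a sketch and defers the handling of the diagonal contribution to an external reference. Your route is more self-contained and transfers verbatim to any lower-semicontinuous kernel on $\mathbb{T}$ with strictly positive Fourier coefficients and $\widehat g(0)=1$. The step you flag as delicate---strict minimality of $E(\cdot)$ at $\lambda$, equivalently the inequality $E(\nu)\ge 1+\sum_{k\ne 0}|\widehat\nu(k)|^{2}/(2|k|)$ for arbitrary probability measures---is indeed the one place requiring care: your claim that $g*\sigma$ ``is maximized at the origin'' is not literally justified since $g*\sigma$ need not be continuous, but the Poisson-mollification device you outline is the standard fix (one shows $E(\nu*P_r)\uparrow$ as $r\uparrow 1$ and identifies the limit with $E(\nu)$ via lower semicontinuity of $g$ and Fatou), and as you note the conclusion can also simply be cited from classical logarithmic potential theory.
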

\begin{proof} The proof is similar in spirit to the main argument in \cite{steini}, we refer to that paper for definition of the Jacobi $\theta-$function and the main idea.
We define a one-parameter family of functions via
$$ f_N(t,x) = \sum_{k=1}^{N}{ \theta_t(x-x_k)}$$
where $\theta_t$ is the Jacobi $\theta-$function. In particular
$$ \lim_{t \rightarrow 0^+}{ f_N(t,x)} = \frac{1}{N}\sum_{k=1}^{N}{\delta_{x_k}} \qquad \mbox{in the sense of weak convergence.}$$
Defining 
$$ g(x) = 1 - \log{(2 \sin{( \pi |x|)} )},$$
we can define the function
$$h(t) = \left\langle g* f_N(t,x), f_N(t,x) \right\rangle$$
is monotonically decaying in time. This is seen by applying the Plancherel theorem
\begin{align*}
 h(t) &= \sum_{k \in \mathbb{Z}} \widehat{g}(k) |\widehat{ f_N(t,x)}(k)|^2 \\
&=  \sum_{k \in \mathbb{Z}}{ \widehat{g}(k)  e^{-4 \pi^2 k^2 t} \left|\sum_{\ell=1}^{N}{e^{-2 \pi i  k x_{\ell}}} \right|^2}
\end{align*}
and using $ \widehat{g}(k) > 0$. We can now take the limit $t \rightarrow \infty$ and obtain that
$$ h(t) \geq \widehat{g}(0) N^2 = N^2.$$
As for the second part of the argument, suppose that $(x_n)$ is not uniformly distributed. Weyl's theorem implies that there exist $\varepsilon>0, k \in \mathbb{N}$ such that
$$ \left| \sum_{\ell=1}^{N}{ e^{-2 \pi i k x_{\ell}}} \right|^2 \geq \varepsilon \qquad \mbox{for infinitely many}~n.$$
Then, however,
$$ h(1) \geq \widehat{g}(0) N^2 + e^{-4 \pi^2 k^2} |\widehat{g}(k)|^2 \geq (1 + \delta)N^2$$
for some $\delta > 0$ and infinitely many $N$.
\end{proof}

\section{Proof of the Theorem}
\subsection{An Inequality.} We first prove an elementary inequality.
\begin{lem} Let $0 < x,y < 1$. Then
$$ 2 \left|\cot{(\pi x)} \cot{(\pi y)} \right|< (1-\log{(2\sin{(\pi x)})}) \csc^2{(\pi y)} +   \csc^2{(\pi x)}(1-\log{(2\sin{(\pi y)})}).$$
\end{lem}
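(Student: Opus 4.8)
The plan is to reduce the two-variable inequality to a scalar estimate by an AM--GM step that decouples the variables, and then verify that scalar estimate by one-variable calculus. Throughout write $L(x) = 1 - \log(2\sin(\pi x))$, and note that for $0<x<1$ we have $\sin(\pi x)\in(0,1]$, hence $L(x)\geq 1-\log 2 > 0$ and $\csc(\pi x)>0$; this positivity is what makes every manipulation below legitimate.

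First I would apply AM--GM to the right-hand side: $L(x)\csc^2(\pi y) + \csc^2(\pi x)L(y) \geq 2\sqrt{L(x)L(y)}\,\csc(\pi x)\csc(\pi y)$. So it suffices to prove the strict inequality $2|\cot(\pi x)\cot(\pi y)| < 2\sqrt{L(x)L(y)}\,\csc(\pi x)\csc(\pi y)$. Dividing by $2\csc(\pi x)\csc(\pi y)>0$ and using the identity $\cot\theta/\csc\theta = \cos\theta$, this is precisely $|\cos(\pi x)\cos(\pi y)| < \sqrt{L(x)L(y)}$. By multiplicativity this in turn follows from the single-variable bound $|\cos(\pi x)| < \sqrt{L(x)}$, i.e. $\cos^2(\pi x) < 1 - \log(2\sin(\pi x))$, valid for every $0<x<1$.

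To prove that scalar inequality I would substitute $w = \sin(\pi x)\in(0,1]$ and use $\cos^2(\pi x) = 1-w^2$, which turns it into $\log(2w) < w^2$ on $(0,1]$. Setting $\phi(w) = w^2 - \log(2w)$, one has $\phi'(w) = 2w - 1/w$, which vanishes only at $w = 1/\sqrt{2}$; since $\phi(w)\to+\infty$ as $w\to 0^+$ and $\phi(1)=1-\log 2>0$, the minimum of $\phi$ on $(0,1]$ is $\phi(1/\sqrt2) = \tfrac12(1-\log 2) > 0$, so $\phi>0$ throughout. Tracing back the reductions gives the lemma, with the strictness coming from the strict inequality $\phi(1/\sqrt2)>0$.

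There is no serious obstacle here: the whole content is the decoupling trick (writing $\cot = \cos\cdot\csc$ so that the AM--GM step separates the $x$- and $y$-dependence) together with the elementary estimate $\log(2w)\le w^2$. The only things to watch are that all the quantities are strictly positive so that AM--GM and the square-root steps are valid, and that one keeps the inequality strict so as to match the statement of the lemma.
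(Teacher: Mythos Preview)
Your proof is correct and essentially matches the paper's approach: both decouple the two variables via an AM--GM step (the paper applies $2ab\le a^2+b^2$ to the left-hand side after clearing denominators, you apply it to the right-hand side) and then reduce everything to the one-variable inequality $\cos^2(\pi x)<1-\log(2\sin(\pi x))$. You additionally supply the explicit verification of that scalar estimate via $w=\sin(\pi x)$ and $\phi(w)=w^2-\log(2w)$, which the paper merely asserts follows by elementary methods.
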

\begin{proof} The right-hand side is always positive, we can thus assume w.l.o.g. that $0 < x,y < 1/2$. Multiplying with $\sin^2{(\pi x)}\sin^2{(\pi y)}$ on both sides leads to the equivalent 
statement $A \leq B$, where
$$ A = 2\sin{(\pi x)}\cos{(\pi x)}  \sin{(\pi y)}\cos{(\pi y)}$$
and 
$$B = (1-\log{(2\sin{(\pi x)})}) \sin^2{(\pi x)} +   \sin^2{(\pi y)}(1-\log{(2\sin{(\pi y)})}).$$
We use $2ab \leq a^2 + b^2$ to argue that
\begin{align*}
A \leq \sin^2{(\pi x)}\cos^2{(\pi x)} +  \sin^2{(\pi y)}\cos^2{(\pi y)}.
\end{align*}
The result then follows from the inequality
$$ \cos^2{(\pi x)} < 1-\log{(2\sin{(\pi x)})} \qquad \mbox{for all}~0 < x \leq \frac{1}{2}$$
which can be easily seen by elementary methods.
\end{proof}

%\begin{figure}[h!]
%\begin{minipage}[l]{.49\textwidth}
%\includegraphics[width = 5cm]{} 
%\end{minipage} 
%\begin{minipage}[r]{.49\textwidth}
%\includegraphics[width = 5cm]{} 
%\end{minipage} 
%\caption{Left: 100 points of a lattice rule with discrepancy $D_N^* \sim 0.08$. Right: evolution of the gradient flow leads to a set with half the discrepancy $D_N^* \sim 0.04$.}
%\end{figure}

\subsection{Proof of the Theorem}
\begin{proof}[Proof of the Theorem.] The symmetries of the sequence and the energy functional imply that it is sufficient to show that the energy is locally convex around the point in $(0,0)$. This means we want to show that
$$f(\varepsilon, \delta) = \sum_{n=1}^{N-1}{     \left(1 - \log{\left(2 \sin{ \left( \pi \left|\frac{n}{N} - \varepsilon \right|\right)} \right)}\right)  \left(1 - \log{\left(2 \sin{ \left( \pi \left| \left\{ \frac{an}{N}\right\} - \delta \right|\right)} \right)} \right) }$$
is strictly positive for all $\varepsilon, \delta$ sufficiently small. We can assume $|\varepsilon|, |\delta| < N^{-1}$, expand the first term in $\varepsilon$ up to second order and note that
\begin{align*}
 1 - \log{( 2 \sin{(\pi (x - \varepsilon))})}&= \left(1 - \log{( 2 \sin{(\pi x)})}\right) + \pi \cot{(x \pi)} \varepsilon \\
&+   \pi^2 \csc^2{(\pi x)} \frac{ \varepsilon^2}{2} + \mathcal{O}(\varepsilon^3).
\end{align*}
This shows that
$$ \frac{\partial}{\partial \varepsilon} g(\varepsilon, 0) \big|_{\varepsilon = 0} =  \sum_{n=1}^{N-1}{   \cot{ \left(\frac{n  \pi }{N}  \right)}  \left(1 - \log{\left(2 \sin{ \left( \pi  \left\{ \frac{an}{N}\right\} \right)} \right)} \right) }.$$
We group the summand $n$ and $N-n$ and observe that $\cot$ is odd on $(0, \pi)$ while the second summand is even, therefore the sum evaluates to 0. The other derivative
$$ \frac{\partial}{\partial \delta} g(0, \delta) \big|_{\delta = 0} =  \sum_{n=1}^{N-1}{   \cot{ \left(  \pi \left\{ \frac{an}{N}\right\}  \right)}  \left(1 - \log{\left(2 \sin{ \left( \pi \frac{n  }{N} \right)} \right)} \right) }$$
vanishes for exactly the same reason and therefore the lattice is a critical point. It remains to show that it is a local minimizer which requires an expansion up to second order. 
This expansion naturally decouples into three sums, where
\begin{align*}
(I) &=  \frac{\pi^2 \varepsilon^2}{2} \sum_{n=1}^{N-1}{    \csc^2{ \left( \pi\frac{n}{N}  \right)}  \left(1 - \log{\left(2 \sin{ \left( \pi \left\{ \frac{an}{N}\right\}  \right)} \right)} \right) }\\
(II) &= \pi^2 \varepsilon \delta \sum_{n=1}^{N-1}{ \cot{  \left( \pi\frac{n}{N}  \right)}\cot{  \left( \left\{ \pi\frac{an}{N} \right\}  \right)}} \\
(III) &=\frac{\pi^2 \delta^2}{2} \sum_{n=1}^{N-1}{  \csc^2{ \left( \pi \left\{ \frac{an}{N} \right\} \right)}    \left(1 - \log{\left(2 \sin{ \left( \pi  \frac{n}{N} \right)}\right)}\right)  }  
\end{align*}

\begin{figure}[h!]
\begin{minipage}[l]{.49\textwidth}
\includegraphics[width = 5cm]{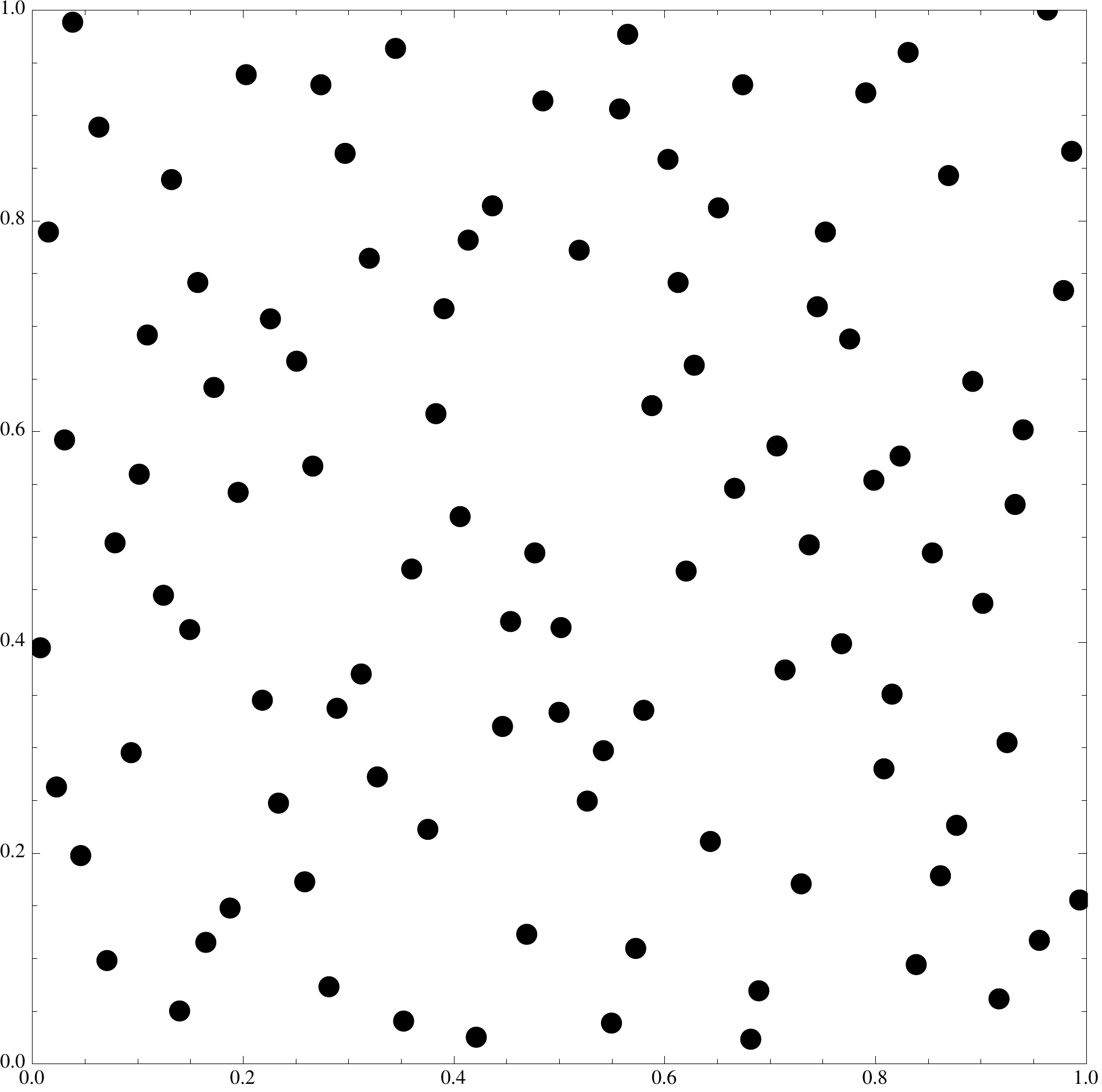} 
\end{minipage} 
\begin{minipage}[r]{.49\textwidth}
\includegraphics[width = 5cm]{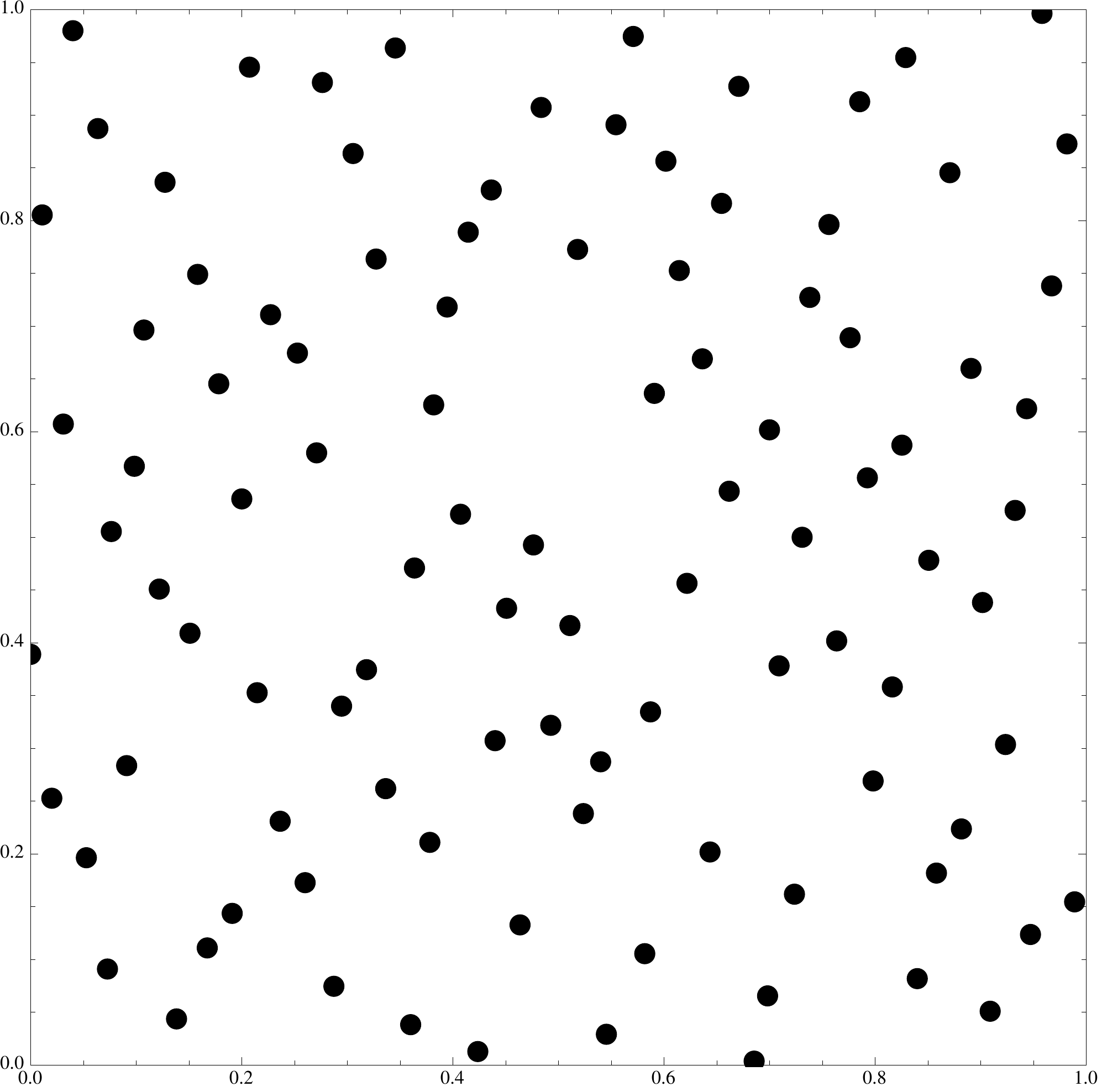} 
\end{minipage} 
\caption{Left: 101 points combined from a Halton sequence $(x \leq 0.5)$ and a Sobol sequence $(x \geq 0.5)$ with $D_N^* \sim 0.042$. Right: gradient flow leads to a set with  discrepancy $D_N^* \sim 0.034$. .}
\end{figure}

We can now argue that $(II)$ is bounded by
\begin{align*}
\left| \sum_{n=1}^{N-1}{ \varepsilon \delta \cot{  \left( \pi\frac{n}{N}  \right)}\cot{  \left( \left\{ \pi\frac{an}{N} \right\}  \right)}} \right| &\leq \left(\frac{\varepsilon^2}{2} + \frac{\delta^2}{2}\right)  \left|\sum_{n=1}^{N-1}{ \cot{  \left( \pi\frac{n}{N}  \right)}\cot{  \left( \left\{ \pi\frac{an}{N} \right\}  \right)}} \right|\\
&\leq \frac{ \varepsilon^2}{2}  \left|  \sum_{n=1}^{N-1}  \cot{  \left( \pi\frac{n}{N}  \right)}\cot{  \left( \left\{ \pi\frac{an}{N} \right\} \right) } \right| \\
&+ \frac{ \delta^2}{2} \left| \sum_{n=1}^{N-1}  \cot{  \left( \pi\frac{n}{N}  \right)}\cot{  \left( \left\{ \pi\frac{an}{N} \right\} \right) } \right|.
\end{align*}
The Lemma implies that we can bound the first term by
\begin{align*}
 \sum_{n=1}^{N-1}  \left|  \cot{  \left( \pi\frac{n}{N}  \right)}\cot{  \left( \left\{ \pi\frac{an}{N} \right\} \right) } \right|  &\leq \frac{1}{2}\sum_{n=1}^{N-1}   \csc^2{ \left( \pi\frac{n}{N}  \right)}  \left(1 - \log{\left(2 \sin{ \left( \pi \left\{ \frac{an}{N}\right\}  \right)} \right)} \right) \\
&+ \frac{1}{2}\sum_{n=1}^{N-1}   \csc^2{ \left( \pi  \left\{ \frac{an}{N}\right\} \right)}  \left(1 - \log{\left(2 \sin{ \left( \pi\frac{n}{N}  \right)} \right)} \right)
\end{align*}

We finally use the algebraic structure and argue that if $a^2 \equiv 1~(\mbox{mod}~N)$, then
$$ n \rightarrow a \cdot n \qquad \mbox{is an involution mod}~N$$
and that implies that both sums are actually the same sum written in a different order.
The arising sum is actually the term we are given in $(I)$. The argument for the third sum is identical and altogether we conclude that
$$ (II) \leq (I) + (III)$$
which implies the desired result.
\end{proof}

It remains an open question whether the same result $ (II) \leq (I) + (III)$ remains true in general. Basic numerical experiments suggest that this should be the case.
We can reformulate the problem by writing out the quadratic form and computing its determinant. The relevant question is then whether $ (1)(2) \geq (3)^2,$
where
\begin{align*}
(1) &=  \sum_{n=1}^{N-1}{    \csc^2{ \left( \pi\frac{n}{N}  \right)}  \left(1 - \log{\left(2 \sin{ \left( \pi \left\{ \frac{an}{N}\right\}  \right)} \right)} \right) }\\
(2) &=    \sum_{n=1}^{N-1}{ \cot{  \left( \pi\frac{n}{N}  \right)}\cot{  \left( \left\{ \pi\frac{an}{N} \right\}  \right)}} \\
(3) &= \sum_{n=1}^{N-1}{  \csc^2{ \left( \pi \left\{ \frac{an}{N} \right\} \right)}    \left(1 - \log{\left(2 \sin{ \left( \pi  \frac{n}{N} \right)}\right)}\right)  }.
\end{align*}

\end{document}